\newcommand{\BlackBoxes}{\global\overfullrule5pt}
\newcommand{\R}{\mathbb{R}}
\newcommand{\N}{\mathbb{N}}
\newcommand{\E}{\mathbb{E}}
\newcommand{\PP}{\mathbb{P}}
\newcommand{\Eop}{\operatorname{\mathbb{\E}}}
\newcommand{\Pop}{\operatorname{\mathbb{\PP}}}
\newtheorem{theorem}{Theorem}
\newtheorem{corollary}[theorem]{Corollary}
\newtheorem{proposition}[theorem]{Proposition}
\theoremstyle{definition}
\newtheorem{remark}[theorem]{Remark}
\newtheorem{definition}[theorem]{Definition}
\numberwithin{equation}{section} \numberwithin{theorem}{section}
\def\0{\kern0pt\-\nobreak\hskip0pt\relax}
 \def\@serieslogo{%
 \vbox to\headheight{%
 \parindent\z@ \fontsize{6}{7\p@}\selectfont
 \vss}}}
\def\makeoverbar#1#2#3#4#5#6#7{%
 \setbox0=\hbox{$\m@th#2\mkern#5mu{{}#3{}}\mkern#6mu$}%
 \setbox1=\null \dimen@=#4\fontdimen8#13 \dimen@=3.5\dimen@
 \advance\dimen@ by \ht0 \dimen@=-#7\dimen@ \advance\dimen@ by \wd0
 \ht1=\ht0 \dp1=\dp0 \wd1=\dimen@
 \dimen@=\fontdimen8#13 \fontdimen8#13=#4\fontdimen8#13
 \rlap{\hbox to \wd0{$\m@th\hss#2{\overline{\box1}}\mkern#5mu$}}
 \fontdimen8#13=\dimen@}
\def\mylabel#1#2{{\def\@currentlabel{#2}\label{#1}}}
\begin{document}


\makeatletter \providecommand\@dotsep{5} \makeatother

\title[Partially Observable Risk-Sensitive Markov Decision Processes]{Partially Observable Risk-Sensitive Markov Decision Processes}

\author[N. \smash{B\"auerle}]{Nicole B\"auerle${}^*$}
\address[N. B\"auerle]{Institute for Stochastics,
Karlsruhe Institute of Technology, D-76128 Karlsruhe, Germany}

\email{\href{mailto:nicole.baeuerle@kit.edu}
{nicole.baeuerle@kit.edu}}


\author[U. \smash{Rieder}]{Ulrich Rieder$^\ddag$}
\address[U. Rieder]{University of Ulm, D-89069 Ulm, Germany}

\email{\href{mailto:ulrich.rieder@uni-ulm.de} {ulrich.rieder@uni-ulm.de}}

\maketitle

\begin{abstract}
We consider the problem of minimizing a certainty equivalent of the total or discounted cost over a finite and an infinite time horizon which is generated by a Partially Observable Markov Decision Process (POMDP). The certainty equivalent is defined by $U^{-1}(\Eop U(Y))$ where $U$ is an increasing function. In contrast to a risk-neutral decision maker, this optimization criterion takes the variability of the cost into account. It contains as a special case the classical risk-sensitive optimization criterion with an exponential utility.   We show that this optimization problem can be solved by embedding the problem into a completely observable Markov Decision Process with extended state space and give conditions under which an optimal policy exists. The state space has to be extended by the joint conditional distribution of current unobserved state and accumulated cost. In case of an exponential utility, the problem simplifies considerably and we rediscover what in previous literature has been named {\em information state}. However, since we do not use any change of measure techniques here, our approach is simpler. A simple example, namely a risk-sensitive Bayesian house selling problem is considered to illustrate our results.

\end{abstract}

\vspace{0.5cm}
\begin{minipage}{14cm}
{\small
\begin{description}
\item[\rm \textsc{ Key words}]
{\small Partially Observable Markov Decision Problem, Certainty Equivalent, Exponential Utility, Updating Operator, Value Iteration.}
\end{description}
}
\end{minipage}

\section{Introduction}\label{sec:intro}\noindent
In this work we consider Partially Observable Markov Decision Processes (POMDP) under a general risk-sensitive optimization criterion for problems with finite and infinite time horizon. This is a continuation of our research published in \cite{br14}. More precisely our aim is to minimize the certainty equivalent of the accumulated total cost of a POMDP. In case of an infinite time horizon, costs have to be discounted. The certainty equivalent of a random variable is defined by $U^{-1}(\Eop U(X))$ where $U$ is an increasing function. If $U(x)=x$ we obtain as a special case the classical risk-neutral decision maker. The case $U(x)=\frac1\gamma e^{\gamma x }$ is often referred to as 'risk-sensitive', however the risk-sensitivity is here only expressed in a special way through the risk-sensitivity parameter $\gamma\neq 0$.  More general, the certainty equivalent may be written (assuming enough regularity of $U$) as
\begin{equation}\label{eq:Urep} U^{-1}\Big(\Eop\big[U(X)\big]\Big)\approx \Eop X - \frac12 l_U(\Eop X) Var[X]\end{equation}
where $$ l_U(x) = -\frac{U''(x)}{U'(x)}$$ is the {\em Arrow-Pratt} function of absolute risk aversion. In case of an exponential utility, this absolute risk aversion is constant (for a discussion see \cite{bpli03}). If $U$ is concave, the variance is subtracted and the decision maker is risk seeking in case cost is minimized, if $U$ is convex, then the variance is added and the decision maker is risk averse.

In case of complete observation it has been shown in \cite{br14} that this problem can be recast in the theory of Markov Decision Processes (MDP) by enlarging the state space with the total discounted cost that has been incurred so far. Numerical solution procedures via linear programming of these completely observable general risk-sensitive Markov Decision Processes can be found in \cite{hj14}. The average cost version of this problem is treated in \cite{cchh15} and for an application in insurance see \cite{bj14}. Now we assume that only one of two components of a controlled Markov process can be observed. However, also the cost may depend on both components which leads to the situation that the cost incurred so far is an unobservable quantity. It is well-known that in case of a risk-neutral decision maker, the partially observable problem can be solved by a completely observable MDP when we enlarge the state space by the conditional distribution of the unobservable state, given the observable history of the process (see e.g. \cite{br11} chapter 5, \cite{herler89} chapter 4 or \cite{hin70} chapter 7). As far as the risk-sensitive problem is concerned we proceed in a similar way. This time however, the corresponding problem with complete observation possesses already an enlarged state consisting of the process state and the total discounted cost so far. Thus, to cope with the partially observable model we construct a Markov Decision Process where the state consists of the observable part of the state and the joint conditional distribution of the unobservable part of the state and unobservable total cost so far, given the observable history of the process.

Early papers \cite{rhe74,yush34} provided rigorous mathematical treatment of POMDPs with Borel state and action spaces. These references already present the solution procedure via the enlargement of the state space and the reduction to an ordinary Markov Decision Process. For a detailed discussion of the theory in the classical risk-neutral setting and for several applications see \cite{br11} chapter 5 and \cite{herler89} chapter 4.
Risk-sensitive Markov Decision processes with the exponential utility have been discussed intensively since the seminal paper of \cite{hm72}. For further references we refer the reader to \cite{br14}. Recent applications of this criterion in a wide range of portfolio optimization problems can be found in \cite{dl14}.
Papers which combine the exponential utility with POMDPs are among others \cite{jjr94,hh99,dims99,s04,cchh05}. In all these papers a control model formulation has been used, where the true, unobservable and controlled state process is a Markov process (under Markovian policies) and observations are obtained by perturbed signals of this process. A change of measure technique is used to obtain independent signals. In order to apply MDP theory, the state space has been enlarged by a quantity that has been called an 'information vector'. In the present paper we use a more general model formulation where both parts (observable and unobservable state) are jointly Markovian and can be controlled jointly. This setting also covers the Bayesian case where the unknown state part is simply an unknown parameter. Also note that our optimization criterion is not restricted to the exponential utility and we do not need a change of measure technique to derive our filter. Moreover, the general approach implies a very natural interpretation for the 'information vector' in the exponential utility case. Besides \cite{jjr94} all the previously mentioned papers focus on the risk-sensitive average criterion by using the vanishing discount approach, i.e., by looking at the $\beta$-discounted problem and by letting $\beta$ go to $1$. In \cite{cchh05} a finite state and action space is considered and emphasis is laid on numerical aspects of the problem. A discrete-time linear quadratic risk-sensitive stochastic control problem with incomplete state information is solved in \cite{whi90}.

Our paper is organized as follows: In the next section we introduce the underlying POMDP and define general history-dependent (deterministic) policies for this model. In section \ref{sec:finiteh} we consider the finite horizon general risk-sensitive problem and introduce continuity and compactness assumptions which will guarantee the existence of optimal policies. Then the problem is embedded into a suitably defined Markov Decision Process where the state space contains among others a joint conditional distribution of the unobservable state and total accumulated cost so far, given the observed process. An updating-operator is defined to create a forward iteration of this joint conditional distribution.  The main theorem of this section (Theorem \ref{theo:Bellman1}) states the validity of the embedding procedure and the existence of optimal policies. Section \ref{sec:special} contains some important special cases. Among them the situation where the cost function does not depend on the unobservable state in which case the updating operator simplifies to the updating operator for classical risk-neutral POMDPs. In case the exponential utility function is used, we rediscover some results of the previous literature. We also consider the case of a power utility where we only get a slight simplification. In Section \ref{sec:casino} we consider a simple risk-sensitive Bayesian house selling problem. We prove the existence of so-called 'reservation levels' which can be seen as thresholds for the acceptance of an offer. These reservation levels depend only on the conditional distribution. In the last section we consider the problem with infinite time horizon and distinguish the case of a convex and a concave utility functions which require separate proofs due to different inequlities. The main theorems (Theorem \ref{theo:limit2}, Theorem \ref{theo:limit4}) show that the value function of the problem can be obtained from a fixed point equation and that an optimal policy exists which is not stationary but still can be generated by only one decision function.

\section{General Partially Observable Risk-Sensitive Markov Decision Processes}\label{sec:intro}\noindent
We suppose that a {\em partially observable Markov Decision Processes} is given which we introduce as follows: We denote this process by $(X_n,Y_n)_{n\in\N_0}$ and assume that  the state space is $E_X\times E_Y$ where $E_X$ and $E_Y$ are Borel spaces, i.e., Borel subsets of some Polish spaces. The $x$-component will be the {\em observable} part, the $y$-component {\em cannot be observed} by the controller. Actions can be taken from a set $A$ which is again a Borel space. The set $D\subset E_X\times A$ is a Borel subset of $E_X\times A$. By $D(x) := \{a\in A : (x,a)\in D\}$ we denote the feasible actions depending on the observable state part $x$. We assume that $D$ contains the graph of a measurable mapping from $E_X$ to $A$. There is a stochastic transition kernel $Q$ from $D\times E_Y$ to $E_X\times E_Y$ which determines the distribution of the new state pair given the current state and action. So $Q(B|x,y,a)$ is the probability that the next state pair is in $B\in\mathcal{B}(E_X\times E_Y)$, given the current state is $(x,y)$ and action $a\in D(x)$ is taken. In what follows we assume that the transition kernel $Q$ has a measurable density $q$ with respect to some $\sigma$-finite measures $\lambda$ and $\nu$, i.e.,
$$ Q(B | x,y,a) =\int_{B} q(x',y'|x,y,a) \lambda(dx') \nu(dy'),\quad B\in\mathcal{B}(E_X\times E_Y).$$
For convenience we introduce the marginal transition kernel density by
$$ q^X(x'|x,y,a) := \int_{E_Y} q(x',y'|x,y,a) \nu(dy').$$
We assume that the initial distribution $Q_0$ of $Y_0$ is known. Further we have a measurable one-stage cost function $c:D\times E_Y\to \R_+$. We assume in particular that the cost $c(x,y,a)$ also depends on the unknown state part $y$. Finally we have a discount factor $\beta \in (0,1]$.

Next we introduce policies for the controller. Here it is important to consider the {\em set of observable histories} which are defined as follows:
\begin{eqnarray*}
  H_0 &:=& E_X \\
  H_n &:=& H_{n-1}\times A\times E_X.
\end{eqnarray*}
An element $h_n=(x_0,a_0,x_1,\ldots,x_n)\in H_n$ denotes the observable history of the process up to time $n$.

\begin{definition}
\begin{enumerate}
\item[a)] A measurable mapping $g_n: H_n\to A$ with the property $g_n(h_n) \in D(x_n)$ for
$h_n\in H_n$ is called a {\em decision rule} at stage $n$.
\item[b)] A sequence $\pi=(g_0,g_1,\ldots)$ where $g_n$ is a decision rule at stage $n$
 for all $n$, is called {\em policy}. We denote by $\Pi$ the set of all policies.
\end{enumerate}
\end{definition}

\section{Finite Horizon Problems}\label{sec:finiteh}
In this section we consider problems with finite time horizon $N$.
For a fixed policy $\pi=(g_0,g_1,\ldots)\in\Pi$ and fixed (observable) initial state $x\in E_X$, the initial
distribution $Q_0$ together with the transition kernel $Q$ define by a theorem of Ionescu Tulcea a probability measure $\Pop^\pi_{xy}$ on
$(E_X\times E_Y)^{N+1} $ endowed with the product $\sigma$-algebra. More precisely $\Pop_{xy}^\pi$ is the probability measure under policy $\pi$ given $X_0=x$ and $Y_0=y$. Later we also use the probability measure $\Pop_x^\pi(\cdot) := \int \Pop_{xy}^\pi (\cdot)Q_0(dy)$. For $\omega=(x_0,y_0,\ldots,x_{N},y_{N})\in (E_X\times E_Y)^{N+1}$ we define the random variables $X_n$ and $Y_n$ in a canonical way by
their projections $$ X_n(\omega)=x_n,\quad Y_n(\omega)=y_n.$$

If $\pi=(g_0,g_1,\ldots )\in\Pi$ is a given policy, we define recursively
\begin{eqnarray*}
  A_0 &:=& g_0(X_0) \\
  A_n &:=& g_n(X_0,A_0,X_1,\ldots ,X_n),
\end{eqnarray*}
the sequence of actions which are chosen successively under policy $\pi$.
We assume that the decision maker is risk averse and has a utility function $U:\R_+\to\R$ which is continuous and strictly increasing.
The optimization problem is defined as follows. For $\pi\in\Pi$ and $X_0=x$ denote
$$ J_{N\pi}(x) := \int_{E_Y} \Eop_{xy}^\pi \left[ U\Big(\sum_{k=0}^{N-1}\beta^k c(X_k,Y_k,A_k)\Big)\right]Q_0(dy)$$ and
\begin{equation}\label{eq:unobservedMDP} J_N(x) := \inf_{\pi\in\Pi} J_{N\pi}(x). \end{equation}
Note that in case $U(x)=x$ we end up with the usual risk neutral Partially Observable Markov Decision Process setup (see e.g. \cite{br11} chapter 5, \cite{herler89} chapter 4). Here however, if $U$ is strictly concave, then $U$ is a utility function  and $U^{-1}(J_N(x))$ represents a {\em certainty equivalent}.
If $U$ is concave, we can see from \eqref{eq:Urep} that the decision maker is risk seeking and if $U$ is convex, then the decision maker is risk averse.

In what follows we show how to solve these kind of problems by using an {\em embedding technique}. In order to later ensure  the existence of integrals and optimal policies we make the following assumptions (A):
\begin{itemize}
  \item[(i)] $U:[0,\infty)\to\R$ is continuous and strictly increasing,
  \item[(ii)] $D(x)$ is compact for all $x\in E_X$,
  \item[(iii)] $x\mapsto D(x)$ is upper semicontinuous, i.e. for all $x\in E_X$ it holds: If $x_n\to x$ and $a_n\in D(x_n)$ for all $n\in\N$, then $(a_n)$ has an accumulation point in $D(x)$,
  \item[(iv)] $(x,y,a) \mapsto c(x,y,a)$ is continuous,
  \item[(v)] $(x,y,x',y',a) \mapsto q(x',y'|x,y,a)$ is continuous and bounded.
    \item[(vi)] $c$ is bounded, i.e., there exist constants  $0<\underline{c}<\overline{c}$ with $\underline{c}\le c(x,y,a) \le \overline{c}$.
\end{itemize}

\begin{remark}
Note that these assumptions are quite strong, however include in particular the case when state and action spaces are finite. (A)(ii-v) also ensure the existence of optimal policies for  risk-neutral POMDP.  
\end{remark}

In \cite{br14} we have solved problem \eqref{eq:unobservedMDP} for the observable case by extending the state space to include the accumulated cost so far. Now in the unobservable model, the state $y$ {\em and} the accumulated cost so far cannot be observed because it depends on $y$. Thus, we proceed as in risk-neutral POMDPs (see e.g. \cite{rhe74,yush34}) and consider probability measures $\mu$ on $E_Y\times \R_+$:
\begin{eqnarray*}
  \mu  &\in& \Pop_b(E_Y\times \R_+) := \Big\{ \mu \;\mbox{ is a probability measure on the}\; \sigma\mbox{-algebra }\; \mathcal{B}(E_Y\times \R_+)\;\mbox{such}\\ && \hspace*{3cm}  \mbox{that there exists a constant}\; K=K(\mu) >0 \;\mbox{with}\; \mu(E_Y\times [0,K])=1\Big\}.
\end{eqnarray*}
$\mu$ plays the role of the conditional distribution on the larger state space of hidden state component and accumulated cost. The precise interpretation will be seen in Theorem \ref{theo:muinterpr}.
In order to solve the optimization problem, we need, as in the risk-neutral case, an updating procedure for the conditional distributions which generates the filter process. The following updating-operator $\Psi:E_X\times A\times E_X\times  \Pop_b(E_Y\times \R_+)\times \R_+\to \Pop_b(E_Y\times \R_+)$ will do the task:
 \begin{eqnarray}\label{eq:Psi}\Psi(x,a,x',\mu,z) (B) &:=&
 \frac{\int\limits_{E_Y}\int\limits_{\R_+} \Big(\int\limits_{B} q(x',y'|x,y,a) \nu(dy')\delta_{s+z c(x,y,a)}(ds')\Big) \mu(dy,ds) }{\int_{E_Y} q^X(x'|x,y,a) \mu^Y(dy) }
 \end{eqnarray}
where $B\in \mathcal{B}(E_Y\times \R_+)$ and $\mu^Y(dy) := \mu(dy,\R_+)$ is the $Y$-marginal distribution of $\mu$. Later we will also need the $S$-marginal  $\mu^S(ds) := \mu(E_Y, ds)$. We define the updating operator only when the denominator is positive.
For $n\in\N$, $h_n := (x_0,a_0,\ldots ,x_n)$ and $B\in \mathcal{B}(E_Y\times \R_+)$  define now a sequence of probability measures
\begin{eqnarray}
\nonumber  \mu_0(B|h_0) &:=& (Q_0\otimes \delta_0)(B), \\
\label{eq:murec1}  \mu_{n+1}(B|h_{n},a,x')&:=& \Psi\big(x_n,a,x',\mu_n(\cdot|h_n),\beta^n\big)(B).
\end{eqnarray}

The next theorem shows that the sequence of probability measures $(\mu_n)$ has the intended interpretation. For this purpose define the r.v.
$$ S_0 := 0,\quad S_n := \sum_{k=0}^{n-1}\beta^k c(X_k,Y_k,A_k),\quad n\in\N.$$
We then obtain:

\begin{theorem}\label{theo:muinterpr}
Suppose $(\mu_n)$ is given by the recursion \eqref{eq:murec1}. For $n\in\N_0$ and all $\pi\in\Pi$ it holds that
$$  \Pop_x^\pi\big((Y_n,S_n)\in B|X_0,A_0,\ldots,X_n\big) = \mu_n(B|X_0,A_0,\ldots,X_n) \; \Pop_x^\pi-a.s.,\quad\mbox{for } B\in \mathcal{B}(E_Y\times\R_+).$$
\end{theorem}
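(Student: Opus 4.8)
The plan is to prove the identity by induction on $n$, in each step characterising the regular conditional distribution through the defining property that it must integrate correctly against every bounded measurable function of the observable history. Two preliminary observations I would use throughout: first, since $A_k=g_k(X_0,A_0,\dots,X_k)$, each action is a deterministic function of $(X_0,\dots,X_k)$, so that $\sigma(X_0,A_0,\dots,X_n)=\sigma(X_0,X_1,\dots,X_n)=:\FF_n^{\mathrm{obs}}$, and $A_0,\dots,A_n$ as well as $S_{n+1}=S_n+\beta^n c(X_n,Y_n,A_n)$ are measurable with respect to the full history $\FF_n:=\sigma(X_0,Y_0,\dots,X_n,Y_n)$; second, conditionally on $\FF_n$ the next pair $(X_{n+1},Y_{n+1})$ has law $Q(\cdot\mid X_n,Y_n,A_n)$ with density $q$. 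The base case $n=0$ is immediate: under $\Pop_x^\pi$ one has $X_0=x$, $Y_0\sim Q_0$ and $S_0\equiv0$, so $(Y_0,S_0)\sim Q_0\otimes\delta_0=\mu_0(\cdot\mid h_0)$.

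For the induction step, assuming the claim at stage $n$, I would fix bounded measurable $f\colon E_Y\times\R_+\to\R$ and $w\colon H_{n+1}\to\R$ and first condition on $\FF_n$:
\begin{multline*}
\Eop_x^\pi\big[f(Y_{n+1},S_{n+1})\,w(X_0,A_0,\dots,X_{n+1})\mid\FF_n\big]\\
=\int_{E_X}\int_{E_Y} f\big(y',S_n+\beta^n c(X_n,Y_n,A_n)\big)\,w(X_0,A_0,\dots,A_n,x')\,q(x',y'\mid X_n,Y_n,A_n)\,\nu(dy')\,\lambda(dx').
\end{multline*}
The right-hand side depends on $\FF_n$ only through $(Y_n,S_n)$ and through $\FF_n^{\mathrm{obs}}$-measurable quantities, so the tower property together with the induction hypothesis gives $\Eop_x^\pi[\cdots\mid\FF_n^{\mathrm{obs}}]$ as the same integral, but with $f(y',s+\beta^n c(X_n,y,A_n))\,w(\dots,x')\,q(x',y'\mid X_n,y,A_n)$ integrated against $\nu(dy')\,\mu_n(dy,ds\mid h_n)\,\lambda(dx')$. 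Taking $f\equiv1$ and letting $w$ vary identifies the conditional $\lambda$-density of $X_{n+1}$ given $\FF_n^{\mathrm{obs}}$ as $x'\mapsto\int_{E_Y}q^X(x'\mid X_n,y,A_n)\,\mu_n^Y(dy\mid h_n)$, which is therefore $\Pop_x^\pi$-a.s.\ strictly positive at $x'=X_{n+1}$, so $\Psi$ is well defined along almost every trajectory.

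The final move is to divide the joint identity by this marginal density, which performs the conditioning on $\FF_{n+1}^{\mathrm{obs}}=\sigma(\FF_n^{\mathrm{obs}},X_{n+1})$. Here \eqref{eq:Psi} is built precisely so that the product of $\int f\,d\Psi(X_n,A_n,X_{n+1},\mu_n(\cdot\mid h_n),\beta^n)$ with the denominator $\int_{E_Y}q^X(X_{n+1}\mid X_n,y,A_n)\,\mu_n^Y(dy\mid h_n)$ equals $\int\int\int f(y',s+\beta^n c(X_n,y,A_n))\,q(X_{n+1},y'\mid X_n,y,A_n)\,\nu(dy')\,\mu_n(dy,ds\mid h_n)$, i.e.\ the numerator, so the denominator cancels. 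Invoking \eqref{eq:murec1}, this yields $\Eop_x^\pi[f(Y_{n+1},S_{n+1})\,w(X_0,A_0,\dots,X_{n+1})]=\Eop_x^\pi[(\int f\,d\mu_{n+1}(\cdot\mid X_0,A_0,\dots,X_{n+1}))\,w(X_0,A_0,\dots,X_{n+1})]$ for all such $w$, which is exactly the defining equation of the claimed conditional distribution; letting $f$ range over a measure-determining class closes the induction. The part I expect to require the most care is this Bayes/disintegration step — justifying rigorously that dividing the joint conditional law of $(X_{n+1},Y_{n+1},S_{n+1})$ given $\FF_n^{\mathrm{obs}}$ by the marginal density of $X_{n+1}$ produces the regular conditional law of $(Y_{n+1},S_{n+1})$ given $\FF_{n+1}^{\mathrm{obs}}$; it rests on the a.s.\ positivity of the denominator and on working with fixed versions of the densities. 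The remaining manipulations are applications of Fubini's theorem, legitimate because $q$ is bounded by assumption (A)(v) and all measures involved are probabilities with $\mu_n\in\Pop_b(E_Y\times\R_+)$.
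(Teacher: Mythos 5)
Your proposal is correct and follows essentially the same route as the paper: an induction in which $\mu_{n+1}$ is verified to satisfy the defining property of a regular conditional distribution by integrating against test functions, with the normalizing denominator of $\Psi$ cancelling against the conditional $\lambda$-density of $X_{n+1}$ (the paper packages this as the identity $\Eop_x^\pi[v(h_n,Y_n,S_n)]=\Eop_x^\pi[v'(h_n)]$ and the cancellation via Fubini, which is your Bayes/disintegration step in different words). Your explicit remark on the a.s.\ positivity of the denominator is a point the paper only touches on implicitly, but it changes nothing of substance.
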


\begin{proof} Recall that $\Pop_x^\pi(\cdot) := \int \Pop_{xy}^\pi (\cdot)Q_0(dy)$.
We first show that
\begin{equation}\label{eq:filter2} E_x^\pi\Big[ v(X_0,A_0,X_1,\ldots ,X_n,Y_n,S_n)\Big] = E_x^\pi\Big[ v'(X_0,A_0,X_1,\ldots ,X_n)\Big] \end{equation}
for all bounded and measurable $v: H_n\times E_Y \times \R_+ \to \R$ and
$$v'(h_n) := \int_{E_Y}\int_{\R_+} v(h_n,y_n,s_n) \mu_n(dy_n,ds_n|h_n).$$ We do this by induction. For $n=0$ both sides reduce to
$\int v(x,y,0) Q_0(dy).$ Now suppose the statement is true for $n-1$. We simply write $g_n$ instead of $g_n(h_n)$.  We obtain for the left-hand side with a given observable history $h_{n-1}$:
\begin{eqnarray*}
 && E_x^\pi\Big[ v(h_{n-1},A_{n-1},X_n,Y_n,S_n)\Big]  = \int_{E_Y}\int_{\R_+} \mu_{n-1}(dy_{n-1},ds_{n-1}|h_{n-1})  \\
 && \hspace*{2cm} \cdot \int_{E_Y}\int_{E_X} \nu(dy_n) \lambda(dx_n) q(x_n,y_n|x_{n-1},y_{n-1},g_{n-1})  \\
   && \hspace*{2cm} \cdot \int_{\R_+} \delta_{s_{n-1}+\beta^{n-1}c(x_{n-1},y_{n-1},g_{n-1})} (ds_n) v(h_{n-1},g_{n-1},x_n,y_n,s_n)  \\
   &=&  \int_{E_Y}\int_{\R_+} \mu_{n-1}(dy_{n-1},ds_{n-1}|h_{n-1}) \int_{E_Y}\int_{E_X} \nu(dy_n) \lambda(dx_n) q(x_n,y_n|x_{n-1},y_{n-1},g_{n-1})  \\
   &&  \hspace*{2cm} \cdot v\big(h_{n-1},g_{n-1},x_n,y_n,s_{n-1}+\beta^{n-1}c(x_{n-1},y_{n-1},g_{n-1})\big).
\end{eqnarray*}
For the right-hand side we obtain (where we insert the recursion for $\mu_n$ in the third equation and use Fubini's theorem, so that the normalizing constant of $\mu_n$ cancels out):
\begin{eqnarray*}
 && E_x^\pi\Big[ v'(h_{n-1},A_{n-1},X_n)\Big]  = \int_{E_Y}\int_{\R_+} \mu_{n-1}(dy_{n-1},ds_{n-1}|h_{n-1})  \\
  && \hspace*{2cm} \cdot \int_{E_X} \lambda(dx_n) q^X(x_n|x_{n-1},y_{n-1},g_{n-1})
    v'(h_{n-1},g_{n-1},x_n) \\
   &=&  \int_{E_Y} \mu_{n-1}^Y(dy_{n-1}|h_{n-1}) \int_{E_X} \lambda(dx_n) q^X(x_n|x_{n-1},y_{n-1},g_{n-1}) \\
   && \hspace*{2cm} \cdot \int_{E_Y}\int_{\R_+} \mu_n(dy_n,ds_n|h_n) v(h_{n-1},g_{n-1},x_n,y_n,s_{n})  \\
   &=& \int_{E_Y}\int_{E_X}\nu(dy_n) \lambda(dx_n) \int_{E_Y}\int_{\R_+}\mu_{n-1}(dy_{n-1},ds_{n-1}|h_{n-1}) q(x_n,y_n|x_{n-1},y_{n-1},g_{n-1})\\
   && \hspace*{2cm}   \cdot \int_{\R_+}  \delta_{s_{n-1}+\beta^{n-1}c(x_{n-1},y_{n-1},g_{n-1})} (ds_n)  v(h_{n-1},g_{n-1},x_n,y_n,s_{n}) \\
 &=&  \int_{E_Y}\int_{\R_+} \mu_{n-1}(dy_{n-1},ds_{n-1}|h_{n-1}) \int_{E_Y}\int_{E_X} \nu(dy_n) \lambda(dx_n) q(x_n,y_n|x_{n-1},y_{n-1},g_{n-1})  \\
   &&  \hspace*{2cm} \cdot v\big(h_{n-1},g_{n-1},x_n,y_n,s_{n-1}+\beta^{n-1}c(x_{n-1},y_{n-1},g_{n-1})\big).
\end{eqnarray*}
Thus equation \eqref{eq:filter2} is proved. It implies in particular for $v=1_{B\times C}$ with $B\in \mathcal{B}(E_Y\times\R_+)$ and $C\subset E_X\times A\times \ldots \times E_X$ a measurable set of histories  until time $n$ that
$$ \Pop_x^\pi\big((Y_n,S_n)\in B, (X_0,A_0,\ldots,X_n)\in C\big) = \Eop_x^\pi\big[ \mu_n(B|X_0,A_0,\ldots,X_n) 1_C((X_0,A_0,\ldots,X_n))\big].$$
This in turn yields by definition that $\mu_n(B|X_0,A_0,\ldots,X_n) $ is a conditional $\Pop_x^\pi$-distribution of $(Y_n,S_n)$ given the history $(X_0,A_0,\ldots,X_n)$.
\end{proof}

Now we turn again to the optimization problem \eqref{eq:unobservedMDP}. Motivated by the previous result we define for $x\in E_X$, $\mu\in \Pop_b(E_Y\times \R_+)$, $z\in(0,1]$ and $n=1,\ldots,N$:
\begin{eqnarray}
\label{eq:Vpi-def}  V_{n\pi}(x,\mu,z)  &:=&  \int_{E_Y}\int_{\R_+} \Eop_{xy}^\pi \left[ U\Big(s+z \sum_{k=0}^{n-1}\beta^k c(X_k,Y_k,A_k)\Big)\right]\mu(dy,ds) \\
\label{eq:V-def}  V_{n}(x,\mu,z)  &:=& \inf_{\pi \in \Pi}  V_{n\pi}(x,\mu,z).
\end{eqnarray}
Obviously we have that $J_N(x) = V_N(x,Q_0\otimes \delta_0,1)$ where $\delta_x$ is the Dirac-measure at the point $x\in\R$.
However, problem \eqref{eq:V-def} can be solved with the general theory of POMDP and \cite{br14} by defining a suitable MDP. For this purpose let us define for a probability measure $\mu \in \Pop(E_Y)$
\begin{eqnarray*}
   Q^X(B|x,\mu,a) &:=& \int_B\int_{E_Y}  q^X(x'|x,{y},a)   \mu(dy) \lambda(dx'),\; B\in\mathcal{B}(E_X)\\
\end{eqnarray*}

We consider a Markov Decision Process with state space ${E}:=E_X\times \Pop_b(E_Y\times \R_+)\times (0,1]$, action space $A$ and admissible actions given by the set $D$. The one-stage cost is zero and the terminal cost function is $ V_0(x,\mu,z):= \int\int U(s) \mu(dy,ds) $. Note that for all $\mu \in \Pop_b(E_Y\times \R_+)$ the expectation is well-defined since the support of $\mu$ in the $s$-component is a compact set. The transition law is given by $\tilde{Q}(\cdot|x,\mu,z,a)$ which is for $(x,\mu,z,a) \in E\times A$, $a\in D(x)$ and a measurable subset $B\subset E$ defined by
\begin{eqnarray*}
  \tilde{Q}(B|x,\mu,z,a):=\int_{E_X} 1_B\Big((x',\Psi(x,a,x',\mu,z),\beta z)\Big) Q^X(dx'|x,\mu^Y,a).
\end{eqnarray*}
Note that $\tilde{Q}$ is again a transition kernel.
Decision rules in the MDP setting are given by measurable mappings $f:{E}\to A$ such that $f(x,\mu,z)\in D(x)$. We denote by $F$ the set of decision rules and by $\Pi^M$ the set of Markov policies $\pi=(f_0,f_1,\ldots)$ with $f_n\in F$. Note that `Markov' refers to the fact that the decision at time $n$ depends only on $x,\mu$ and $z$. Further note that we have $\Pi^M\subset \Pi$ in the following sense: For every $\pi=(f_0,f_1,\ldots)\in \Pi^M$ we find a $\sigma=(g_0,g_1,\ldots)\in\Pi$ such that
\begin{eqnarray*}
 g_0(x_0) &:=& f_0(x_0,\mu_0,1),\\
 g_n(h_n)  &:=& f_n\big(x_n,\mu_n(\cdot|h_n),\beta^n\big),\; n\in \N.
\end{eqnarray*}
With this interpretation $V_{n\pi}$ is also defined for $\pi\in\Pi^M$.

Let us now introduce the set
\begin{eqnarray*} \mathcal{C}({E}) &:=& \Big\{ v:{E}\to\R : v\;\mbox{is lower semicontinuous and}\; v\ge V_0\Big\},
\end{eqnarray*}
where we use the topology of weak convergence on $\Pop_b(E_Y\times \R_+)$.
For $v\in \mathcal{C}({E})$ and $f\in F$ we consider the operator
$$ (T_fv)(x,\mu,z) := \int_{E_X} v\Big(x',\Psi(x,f(x,\mu,z),x',\mu,z),\beta z\Big) Q^X\big(dx'|x,\mu^Y,f(x,\mu,z)\big) ,\quad (x,\mu,z)\in {E}$$
which is well-defined. The minimal cost operator of this Markov Decision Model is given by
\begin{equation}\label{eq:Top}(T v)(x,\mu,z) := \inf_{a\in D(x)} \int_{E_X} v\Big(x',\Psi(x,a,x',\mu,z),\beta z\Big) Q^X(dx'|x,\mu^Y,a),\quad (x,\mu,z)\in {E}\end{equation}
which is again well-defined and $T_f V_0 \ge TV_0\ge V_0$ (see also the proof below). Note that $V_0\in \mathcal{C}({E})$. If a decision rule $f\in F$ is such that $T_fv=Tv$, then $f$ is called a {\em minimizer} of $v$. We obtain:

\begin{theorem}\label{theo:Bellman1}
It holds  that
\begin{itemize}
  \item[a)] For a policy $\pi=(f_0,f_1,f_2,\ldots)\in\Pi^M$ we have the following cost iteration:\\  $V_{n\pi} = T_{f_0}\ldots T_{f_{n-1}} V_{0}$ for $n=1,\ldots ,N$.
  \item[b)] $V_n\in \mathcal{C}({E})$ and $V_{n} = T V_{n-1}$, for $n=1,\ldots ,N$, i.e.,
  $$ V_{n+1}(x,\mu,z) = \inf_{a\in D(x)} \int_{E_X} V_{n}\Big(x',\Psi(x,a,x',\mu,z),\beta z\Big) Q^X(dx'|x,\mu^Y,a),\; (x,\mu,z)\in {E}.$$
The value function of \eqref{eq:unobservedMDP} is then given by $J_N(x)=V_N(x,Q_0\otimes \delta_0,1)$.
  \item[c)] For every $n=1,\ldots ,N$ there exists a  minimizer $f^*_n\in F$ of $V_{n-1}$ and $(g_0^*,\ldots,g_{N-1}^*)$ with
 $$ g_n^*(h_n) := f_{N-n}^*\big( x_n, \mu_n(\cdot|h_n),\beta^n\big),\quad n=0,\ldots,N-1$$
is an optimal policy for problem \eqref{eq:unobservedMDP}. Note that the optimal policy consists of decision rules which depend on the current state and the current joint conditional distribution of accumulated cost and hidden state.
  \end{itemize}
\end{theorem}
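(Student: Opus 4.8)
The plan is to prove the three parts in the order a), b), c), since b) uses a) and c) uses b). The overarching strategy is to identify $V_{n\pi}(x,\mu,z)$ as a conditional expectation and then peel off the first stage using the tower property together with the filter interpretation established in Theorem \ref{theo:muinterpr}.

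For part a), I would argue by induction on $n$. The base case $n=0$ is just the definition of $V_0$. For the inductive step, fix $\pi=(f_0,f_1,\ldots)\in\Pi^M$ and write $\pi'=(f_1,f_2,\ldots)$ for the shifted policy. In the definition \eqref{eq:Vpi-def} of $V_{n\pi}(x,\mu,z)$, I would split off the $k=0$ term in the sum, so that the remaining sum is $\sum_{k=1}^{n-1}\beta^k c(X_k,Y_k,A_k) = \beta\sum_{k=0}^{n-2}\beta^k c(X_{k+1},Y_{k+1},A_{k+1})$. Conditioning on $X_1$ (and using the Markov property of $Q$ together with the initial-state structure), the inner expectation over the trajectory from time $1$ onward, started from state $(X_1,Y_1)$ with accumulated cost $S_1 = z\,c(x,Y_0,A_0)$ and discount $\beta z$, is exactly $V_{n-1,\pi'}(X_1,\cdot,\beta z)$ evaluated at the conditional distribution of $(Y_1,S_1)$ given $X_1$ — and by the updating recursion \eqref{eq:Psi}, \eqref{eq:murec1} (applied at the level of the operator $\Psi$ rather than the sequence $\mu_n$, which is the content of Theorem \ref{theo:muinterpr}) this conditional distribution is $\Psi(x,f_0(x,\mu,z),X_1,\mu,z)$. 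Matching the outer integration against $Q^X(dx'|x,\mu^Y,f_0(x,\mu,z))$ then gives $V_{n\pi} = T_{f_0} V_{n-1,\pi'}$, and the inductive hypothesis $V_{n-1,\pi'} = T_{f_1}\cdots T_{f_{n-1}}V_0$ closes the induction. The technical care here is the Fubini interchange that makes the normalizing constant of $\Psi$ cancel against the denominator in $Q^X$; this is precisely the computation already carried out in the proof of Theorem \ref{theo:muinterpr}, so I would invoke it rather than redo it.

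For part b), I would combine a) with the general verification theory of the observable risk-sensitive MDP from \cite{br14}. The point is that the constructed model with state space $E$, zero one-stage cost, terminal cost $V_0$, transition kernel $\tilde Q$ and the value iteration operator $T$ of \eqref{eq:Top} is a \emph{completely observable} MDP, and by a) its $n$-stage value under a Markov policy equals $T_{f_0}\cdots T_{f_{n-1}}V_0$; taking the infimum over $\Pi^M$ and noting that $\Pi^M \subset \Pi$ in the sense described before the theorem gives $V_n = TV_{n-1}$ once we know the infimum over $\Pi$ is attained by Markov policies. To run the standard argument one needs: (1) that $T$ maps $\mathcal C(E)$ into $\mathcal C(E)$ — i.e. lower semicontinuity is preserved — and (2) that a measurable minimizer exists. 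Both follow from the continuity/compactness assumptions (A): one checks that $(x,\mu,z,a)\mapsto \big(x',\Psi(x,a,x',\mu,z),\beta z\big)$ is continuous in the weak topology on $\Pop_b(E_Y\times\R_+)$ (using (A)(iv),(v) and that $c$ is bounded so the support stays in a fixed compact), that $Q^X(\cdot|x,\mu^Y,a)$ depends continuously (Feller property, again from (A)(v)) on its arguments, and hence that $(x,\mu,z,a)\mapsto \int v(\ldots)\,Q^X(dx'|\cdots)$ is lower semicontinuous for lsc $v$ bounded below; then the measurable selection theorem (e.g. the one in \cite{br11}) applied on the compact sets $D(x)$, which vary upper semicontinuously by (A)(ii),(iii), yields the minimizer $f_n^*$ and preserves lower semicontinuity of the infimum. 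This gives $V_n\in\mathcal C(E)$ and the Bellman equation simultaneously by induction, with base case $V_0\in\mathcal C(E)$ noted in the text. The identification $J_N(x) = V_N(x,Q_0\otimes\delta_0,1)$ is immediate from $\mu_0 = Q_0\otimes\delta_0$ and the definitions.

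For part c), the existence of $f_n^*$ is part of the induction in b). It then remains to verify that the time-reversed concatenation $g_n^*(h_n):= f_{N-n}^*(x_n,\mu_n(\cdot|h_n),\beta^n)$ is optimal for \eqref{eq:unobservedMDP}. This is a direct consequence of a) and the minimizer property: for the Markov policy $\pi^* = (f_N^*, f_{N-1}^*,\ldots,f_1^*)\in\Pi^M$ one has $V_{N\pi^*} = T_{f_N^*}\cdots T_{f_1^*}V_0$, and using $T_{f_n^*}V_{n-1} = TV_{n-1} = V_n$ repeatedly collapses this to $V_N$; since $V_N = \inf_\pi V_{N\pi}$, the policy $\pi^*$ is optimal in $\Pi^M$ and, via the embedding $\Pi^M\subset\Pi$, the corresponding history-dependent policy $(g_0^*,\ldots,g_{N-1}^*)$ is optimal in $\Pi$, and evaluated at $(x,Q_0\otimes\delta_0,1)$ achieves $J_N(x)$. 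The only subtlety worth spelling out is the index reversal — the minimizer of $V_{n-1}$ is the decision rule used when there are $n$ stages to go, hence it is applied at \emph{calendar} time $N-n$ — together with the observation that $\mu_n(\cdot|h_n)$ is exactly the state component that $f_{N-n}^*$ needs, which is guaranteed by Theorem \ref{theo:muinterpr}.

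The main obstacle, I expect, is the continuity part of b): verifying that $\Psi$ is weakly continuous jointly in $(x,a,x',\mu,z)$ on the region where the denominator is positive, and that the resulting integral operator preserves lower semicontinuity despite the infimum over $a$ and the weak topology on the infinite-dimensional space $\Pop_b(E_Y\times\R_+)$. The boundedness assumptions (A)(v),(vi) are precisely what keep the denominator bounded away from zero on compacta and the $S$-supports inside a fixed compact interval, so the argument goes through, but it is the place where the hypotheses are really used and where one must be careful about which topology makes the selection theorem applicable.
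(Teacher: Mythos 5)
Your proposal is correct and follows essentially the same route as the paper: part a) by induction with the same Fubini argument that cancels the normalizing constant of $\Psi$ (the paper expands $T_{f_0}V_{n\vec{\pi}}$ and shows it equals $V_{n+1,\pi}$ rather than peeling the first stage off $V_{n+1,\pi}$, but the computation is identical), and parts b), c) by the reduction to Markov policies and the standard weak-continuity, compactness and measurable-selection machinery from \cite{br11}, applied to the weakly continuous kernel $Q^X$ and the updating operator $\Psi$. One correction to your closing remark: assumptions (A)(v),(vi) do \emph{not} keep the denominator $\int_{E_Y} q^X(x'|x,y,a)\,\mu^Y(dy)$ bounded away from zero on compacta, and this is not needed --- the paper instead observes that the set of $x'$ where this denominator vanishes is a $Q^X(\cdot\,|x,\mu^Y,a)$-null set, so continuity of $\Psi$ only at points where it is defined already yields lower semicontinuity of $(x,\mu,z,a)\mapsto\int v\bigl(x',\Psi(x,a,x',\mu,z),\beta z\bigr)\,Q^X(dx'|x,\mu^Y,a)$; this null-set observation is the one step you should add to make part b) airtight.
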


\begin{proof}
  The proof of part a) is by induction. For $n=1$ we obtain with $a:=f_0(x,\mu,z)$:
\begin{eqnarray*}
    T_{f_0} V_{0}(x,\mu,z) &=& \int_{E_X}  V_{0}\Big(x',\Psi(x,a,x',\mu,z),\beta z\Big) Q^X(dx'|x,\mu^Y,a)\\
   &=& \int_{E_Y}\int_{\R_+}\int_{E_X} \int_{\R_+} U(s')  \delta_{s+z c(x,y,a)}(ds') q^X(x'|x,y,a)\lambda(dx')\mu(dy,ds)\\
    &=& \int_{E_Y} \int_{\R_+} U\big(s+z c(x,y,a)\big) \mu(dy,ds)\\
    &=& V_{1\pi}(x,\mu,z).
\end{eqnarray*}

Suppose the statement is true for $V_{n\pi}$. In order to ease notation we denote for a policy $\pi=(f_0,f_1,f_2,\ldots)\in\Pi^M$ by $\vec{\pi}=(f_1,f_2,\ldots)$ the shifted policy. Moreover let again $a := f_0(x,\mu,z)$. Then
\begin{eqnarray*}
  &&(T_{f_0}\ldots T_{f_{n-1}} V_{0})(x,\mu,z) = \int_{E_X} V_{n\vec{\pi}}\Big(x',\Psi(x,a,x',\mu,z),\beta z\Big)  Q^X(dx'|x,\mu^Y,a) \\
   &=& \int_{E_X}\int_{E_Y} \int_{\R_+}  \Eop^{\pi}_{x',y'} \Big[ U\big(s'+ z \sum_{k=0}^{n-1} \beta^{k+1} c(X_k,Y_k,A_k)\big)\Big] \Psi(x,a,x',\mu,z)(dy',ds') Q^X(dx'|x,\mu^Y,a)\\
  &=&  \int_{E_X}\int_{E_Y} \int_{\R_+}  \Eop^{\pi} \Big[ U\big(s'+ z \sum_{k=1}^{n} \beta^{k} c(X_k,Y_k,A_k)\big)\Big|X_1=x',Y_1=y'\Big] \cdot \\ &&
    \hspace*{2cm} \int_{E_Y} \int_{\R_+} q(x',y'|x,y,a) \delta_{s+z c(x,y,a)}(ds')\mu(dy,ds) \nu(dy') \lambda(dx')\\
   &=&\int_{E_Y}\int_{E_Y}\int_{E_X}\int_{\R_+}  \Eop^{\pi} \Big[ U\big(s+z c(x,y,a)+ z \sum_{k=1}^{n} \beta^{k} c(X_k,Y_k,A_k)\big)\Big|X_1=x',Y_1=y'\Big] \\ && \hspace*{2cm} q(x',y'|x,y,a) \mu(dy,ds) \nu(dy') \lambda(dx')\\
   &=&\int_{E_Y}\int_{\R_+}  \Eop^{\pi}_{xy} \Big[ U\big(s+ z \sum_{k=0}^{n} \beta^{k} c(X_k,Y_k,A_k)\big)\Big] \mu(dy,ds)\\
   &=& V_{n+1\pi}(x,\mu,z).
\end{eqnarray*}
and the statement in part a) is shown.

Next we prove parts b) and c) together. From part a) it follows that  for $\pi\in\Pi^M$, the value functions in problem \eqref{eq:V-def} indeed coincide with the value functions of the previously defined MDP. From MDP theory it follows in particular that it is enough to consider Markov policies $\Pi^M$, i.e., $V_n=\inf_{\sigma\in\Pi} V_{n\sigma} = \inf_{\pi\in\Pi^M} V_{n\pi}$ (see e.g. \cite{hin70} Theorem 18.4). Next consider functions $v\in \mathcal{C}({E})$. We show that $Tv\in \mathcal{C}({E})$
and that there exists a minimizer for $v$. Statements b) and c) then follow from Theorem 2.3.8 in \cite{br11}.

We start by proving that $Q^X(\cdot |x,\mu^Y,a)$ is weakly continuous, i.e., we have to show that
\begin{equation}\label{eq:vmap}(x,\mu,a) \mapsto \int v(x') Q^X(dx'|x,\mu^Y,a)\end{equation}
is continuous for all $v\in C_b(E_X)$ where $C_b(E_X)$ is the set of bounded, continuous functions on $E_X$.
Obviously $\mu_n \Rightarrow \mu$ implies that $\mu_n^Y \Rightarrow \mu^Y$ where $\Rightarrow$ denotes weak convergence.
From our standing assumption (A)(v) it follows that $Q(\cdot |x,y,a)$ is weakly continuous. Hence we obtain from Theorem 17.11 in \cite{hin70} that the function in \eqref{eq:vmap} is continuous.


Next we show that
$$(x,a,x',\mu,z) \mapsto \Psi(x,a,x',\mu,z)$$
is continuous at all points where $\Psi$ is defined, i.e., if $(x_n,a_n,x'_n,\mu_n,z_n)$ converges to $(x,a,x',\mu,z)$ in $ E_X\times A\times E_X\times  \Pop_b(E_Y\times \R_+)\times (0,1] $ it follows that $\Psi(x_n,a_n,x'_n,\mu_n,z_n)\Rightarrow \Psi(x,a,x',\mu,z)$ where $(x_n,a_n,x'_n,\mu_n,z_n)$ and $(x,a,x',\mu,z)$ are such that $\int_{E_Y} q^X(x'_n|x_n,y,a_n) \mu^Y(dy)>0$ and $\int_{E_Y} q^X(x'|x,y,a) \mu^Y(dy)>0$. Hence for $v\in C_b(E_Y\times \R_+)$ consider
$$ \int_{E_Y}\int_{\R_+} v(y',s') \Psi(x,a,x',\mu,z)(dy',ds').$$ If we plug in the definition of $\Psi$ we get a quotient whose numerator and denominator will be investigated separately. For the numerator we obtain
\begin{eqnarray*}
   && \int_{E_Y}\int_{\R_+}\int_{E_Y} v\big(y',s+zc(x,y,a)\big) q(x',y'|x,y,a) \nu(dy') \mu(dy,ds)
\end{eqnarray*}
which is continuous by assumption (A)(iv,v) and Theorem 17.11 in \cite{hin70}. The denominator
$$ \int_{E_Y} q^X(x'|x,y,a) \mu^Y(dy)$$
is continuous in $(x,a,x',\mu)$ by the same reasoning. Hence $\Psi$ is continuous.

Now suppose $v\in \mathcal{C}({E})$. Taking into account assumption (A), it obviously follows that $(x,x',a,\mu,z) \mapsto v\Big(x',\Psi(x,a,x',\mu,z),\beta z\Big) $ is lower semicontinuous. Again we apply Theorem 17.11 in \cite{hin70} to obtain that $(x,\mu,z,a) \mapsto \int v\Big(x',\Psi(x,a,x',\mu,z),\beta z\Big) Q^X(dx'|x,\mu^Y,a)$ is lower semicontinuous. Note here that continuity of $\Psi$ at those points where the denominator is positive is sufficient, since the other points form a $Q^X$ null-set.
By Proposition 2.4.3 in \cite{br11} it follows that $(x,\mu,z) \mapsto (Tv)(x,\mu,z)$ is lower semicontinuous and there exists a minimizer of $v$.

The inequality $Tv \ge V_0$ is obtained from
\begin{eqnarray*}
&&    \int_{E_X} v\Big(x',\Psi(x,a,x',\mu,z),\beta z\Big) Q^X(dx'|x,\mu^Y,a) \\
&\ge&   \int_{E_X} \int_{\R_+} U(s') \Psi^Y(x,a,x',\mu,z)(ds') Q^X(dx'|x,\mu^Y,a) \\
   &=& \int_{E_Y}\int_{\R_+} U\big(s+z c(x,y,a)\big) \int_{E_X} q^X(x'|x,y,a) \lambda (dx') \mu(dy,ds)\\
   &\ge& \int_{E_Y}\int_{\R_+} U\big(s) \mu(dy,ds) =V_0(x,\mu,z)
\end{eqnarray*}
which implies the statement.
\end{proof}

\begin{remark}
Note that $\mu\mapsto V_{n\pi}(x,\mu,z)$ is by definition a  linear mapping and thus $\mu\mapsto V_n(x,\mu,z)$ is concave.
\end{remark}

\begin{remark}\label{rem:Top}
Since $V_0\in \mathcal{C}({E}),$ $TV_0\ge V_0$ and since the $T$-operator is monotone,  $V_n =T^nV_0$ is increasing in $n$.
\end{remark}

\begin{remark}\label{rem:reward1}
Of course instead of minimizing cost one could also consider the problem of maximizing reward. Suppose that $r:D\to [\underline{r},\bar{r}]$ (with $0<\underline{r}<\bar{r}$) is a one-stage reward function and the problem is
\begin{equation}\label{eq:prob2}J_N(x) := \sup_{\sigma\in\Pi} \int_{E_Y} \Eop_{xy}^\sigma\Big[U\Big(\sum_{k=0}^{N-1} r(X_k,A_k)\Big)\Big] Q_0(dy),\quad x\in E_X.
\end{equation}
It is possible to treat this problem in exactly the same way using straightforward modifications.
\end{remark}

\section{Some Special Cases}\label{sec:special}

\subsection{The cost function does not depend on the hidden state}\label{subsec:observe}

An important special case is obtained when the one-stage cost function does not depend on the hidden state $y$, i.e., $c(x,y,a)=c(x,a)$. In this case the cost which has accumulated so far is always observable. The recursion for the joint conditional distribution $\mu_n(\cdot|h_n)$ of cost and hidden state simplifies considerable. In order to explain this, we define the operator $\Phi:E_X\times A\times E_X\times \Pop(E_Y) \to \Pop(E_Y)$ by
\begin{equation*}
 \Phi(x,a,x',\mu)(B) :=   \frac{\int_{B}\int_{E_Y} q(x',y'|x,y,a) \mu(dy)\nu(dy')}{\int_{E_Y} q^X(x'|x,y,a) \mu(dy) },\; B\in\mathcal{B}(E_Y).
\end{equation*}
Note that $\Phi$ is exactly the usual updating (Bayesian) operator which appears in classical POMDP (see e.g. \cite{br11}, section 5.2). It updates the conditional probability of the unobservable state. In what follows denote by $(\mu_n^\phi)$ the sequence of probability measures on $E_Y$ generated by $\Phi$ with $\mu_0^\Phi := Q_0$. Then we obtain:

\begin{proposition}
Suppose $c(x,y,a)=c(x,a)$ is independent of $y$. Then $\mu_n(\cdot|h_n)$ from \eqref{eq:murec1} can be written as
\begin{equation}\label{eq:murec2}
    \mu_n(B_1\times B_2|h_n) = \mu_n^Y(B_1|h_n) \cdot \mu_n^S(B_2|h_n), \;\; \mbox{where}\; B_1\times B_2 \in \mathcal{B}(E_Y\times \R_+)
\end{equation}
with $\mu_n^S(\cdot|h_n)= \delta_{\sum_{k=0}^{n-1}\beta^k c(x_k,a_k)}$ and $\mu_n^Y(\cdot|h_n) = \mu_n^\Phi(\cdot|h_n).$
\end{proposition}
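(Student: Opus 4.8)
The plan is to prove the statement by induction on $n$, tracking the product structure through the updating operator $\Psi$ from \eqref{eq:Psi}. For the base case $n=0$, the definition \eqref{eq:murec1} gives $\mu_0(\cdot\,|h_0) = Q_0\otimes\delta_0$, which is already of the asserted product form, with $Y$-marginal $Q_0 = \mu_0^\Phi$ and $S$-marginal $\delta_0$ (the Dirac at the empty sum); so \eqref{eq:murec2} holds trivially.

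For the inductive step, suppose \eqref{eq:murec2} holds at stage $n$, i.e. $\mu_n(\cdot\,|h_n) = \mu_n^\Phi(\cdot\,|h_n)\otimes\delta_{\sigma_n}$ with $\sigma_n := \sum_{k=0}^{n-1}\beta^k c(x_k,a_k)$. I would substitute this into $\mu_{n+1}(B|h_n,a,x') = \Psi\big(x_n,a,x',\mu_n(\cdot\,|h_n),\beta^n\big)(B)$ and exploit that $c(x_n,y,a)=c(x_n,a)$ does not depend on $y$. Then the Dirac mass $\delta_{s+\beta^n c(x_n,a)}$ appearing in the numerator of $\Psi$ is constant in $y$, and the inner $\R_+$-integral against $\delta_{\sigma_n}$ merely evaluates $s$ at $\sigma_n$, producing the single Dirac $\delta_{\sigma_n+\beta^n c(x_n,a)} = \delta_{\sigma_{n+1}}$. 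Evaluating on a measurable rectangle $B = B_1\times B_2$ and using Fubini, the numerator factors as $\delta_{\sigma_{n+1}}(B_2)\cdot\int_{B_1}\int_{E_Y} q(x',y'|x_n,y,a)\,\mu_n^\Phi(dy)\,\nu(dy')$, while the denominator of $\Psi$ is exactly $\int_{E_Y} q^X(x'|x_n,y,a)\,\mu_n^\Phi(dy)$. Hence $\mu_{n+1}(B_1\times B_2|h_{n+1}) = \delta_{\sigma_{n+1}}(B_2)\cdot\Phi\big(x_n,a,x',\mu_n^\Phi(\cdot\,|h_n)\big)(B_1)$, and by definition of the $\Phi$-recursion the second factor is $\mu_{n+1}^\Phi(B_1|h_{n+1})$. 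Specializing $B_1=E_Y$ and $B_2=\R_+$ identifies the two factors as $\mu_{n+1}^S(\cdot\,|h_{n+1}) = \delta_{\sigma_{n+1}}$ and $\mu_{n+1}^Y(\cdot\,|h_{n+1}) = \mu_{n+1}^\Phi(\cdot\,|h_{n+1})$ respectively, closing the induction.

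The step requiring the most care is the manipulation of $\Psi$ in the inductive step: one must remember that $\Psi$ is defined only where the denominator $\int_{E_Y} q^X(x'|x_n,y,a)\,\mu_n^Y(dy)$ is strictly positive, and one should observe that verifying the identity on rectangles $B_1\times B_2$ is enough, since these form a $\pi$-system generating $\mathcal{B}(E_Y\times\R_+)$ and both sides are probability measures. Beyond this bookkeeping the argument is a direct computation with Dirac masses and Fubini's theorem; the essential point is simply that the $y$-independence of $c$ decouples the cost-accumulation component from the Bayesian filtering component, reducing $\Psi$ to the classical operator $\Phi$ on the $Y$-coordinate and to a deterministic shift on the $S$-coordinate.
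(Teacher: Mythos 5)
Your proof is correct and follows essentially the same route as the paper's: induction on $n$, substituting the product form into the recursion \eqref{eq:murec1}, using the $y$-independence of $c$ to pull the Dirac factor $\delta_{\sigma_n+\beta^n c(x_n,a)}$ out of the $y$-integral, and identifying the remaining quotient as $\Phi$ applied to $\mu_n^\Phi$. The added remarks about rectangles forming a generating $\pi$-system and the positivity of the denominator are sound bookkeeping that the paper leaves implicit.
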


\begin{proof}
The proof is by induction on $n$. The statement for $n=0$ is true by definition. Now suppose the statement is true for $n$. We obtain with $h_{n+1}=(h_n,a_n,x')$, $x_n = x$ and $a_n = a$:
\begin{eqnarray*}
\mu_{n+1}(B_1\times B_2|h_{n+1})&=& \frac{\int_{E_Y}\int_{\R_+}\int_{B_1}\int_{B_2} q(x',y'|x,y,a) \nu(dy') \delta_{s+\beta^n c(x,a)}(ds') \mu_n^Y(dy|h_n) \mu_n^S(ds|h_n) }{\int_{E_Y} q^X(x'|x,y,a) \mu^Y_n(dy|h_n)}\\
&=& \frac{\int_{B_1}\int_{E_Y} q(x',y'|x,y,a) \mu_n^Y(dy|h_n)\nu(dy')}{\int_{E_Y} q^X(x'|x,y,a) \mu^Y_n(dy|h_n) } \int_{\R_+} \delta_{s+\beta^n c(x,a)}(B_2) \mu_n^S(ds|h_n) \\
&=&  \Phi\big(x,a,x',\mu_n^Y(\cdot|h_n)\big)(B_1) \cdot  \delta_{\sum_{k=0}^{n}\beta^k c(x_k,a_k)}(B_2).
\end{eqnarray*}
Noting that $\mu_n^Y(\cdot|h_n) = \mu_n^\Phi(\cdot|h_n)$ by the induction hypothesis, the statement follows.
\end{proof}

Thus, the problem simplifies considerably since instead of probability measures on $\mathcal{B}(E_Y\times\R_+)$ we only need to consider probability measures on $\mathcal{B}(E_Y)$ together with an observable sequence of accumulated cost. We can interpret the embedding MDP as one with state space  $E_X\times \Pop(E_Y)\times \R_+\times (0,1]$ and the value iteration reads
\begin{eqnarray*}
  V_0(x,\mu,s,z) &:=& U(s) \\
  V_{n+1}(x,\mu,s,z) &=& \inf_{a\in D(x)} \int V_{n}\Big(x',\Phi(x,a,x',\mu),s+ zc(x,a),\beta z\Big) Q^X(dx'|x,\mu,a),\\
  && \quad\quad\quad \mbox{for}\; (x,\mu,s,z)\in E_X\times \Pop(E_Y)\times \R_+\times (0,1],
\end{eqnarray*}
where $\Phi$ has been defined in the previous calculation.

\begin{remark}
In case there is no unobservable component, i.e., we have a completely observable risk-sensitive MDP, the updating operator $\Psi:E_X\times A\times E_X\times \Pop(\R_+)\times (0,1] \to  \Pop(\R_+)$ boils down to
$$ \Psi(x,a,x',\mu,z)(B) = \int_{B} \delta_{s+zc(x,a)}\mu(ds),\; B\in \mathcal{B}(\R_+)$$
and we obtain $\mu_n(B|h_n)= \delta_{\sum_{k=0}^{n-1} \beta^k c(x_k,a_k)}(B).$
Hence the updating process is deterministic and instead of $\mu$ we can simply store the accumulated cost so far. The value iteration then reads
\begin{eqnarray*}
  V_0(x,s,z) &=& U(s),\quad (x,s,z) \in E_X\times \R_+\times (0,1] \\
  V_{n+1}(x,s,z) &=& \inf_{a\in D(x)} \int V_{n}(x',s+zc(x,a),z\beta) Q(dx'|x,a),
\end{eqnarray*}
which is exactly the situation which has been investigated in \cite{br14}.
\end{remark}

\subsection{A particular class of partially observable control models}
The transition law of the process $(X_n,Y_n)_{n\in\N_0}$ we consider here is quite general. For other general models see Chapter 4 in \cite{herler89}. All these general models contain in particular the following class which appears very often in applications (in particular this is the starting point in \cite{jjr94,dims99}):
\begin{eqnarray*}
  X_{n+1} &=& h(Y_n) + \eta_{n+1} \\
  Y_{n+1} &=& b(Y_n,A_n)+\eta_{n+1}
\end{eqnarray*}
where $(\varepsilon_n)$ is a sequence of independent and identically distributed random variables with density $\varphi_\varepsilon$ and $(\eta_n)$ is a sequence of independent and identically distributed random variables with density $\varphi_\eta$. Both sequences are assumed to be independent and we assume for simplicity that $E_X=E_Y=\R$. We consider here an additive noise but this can also be part of the functions $b$ and $h$ respectively. The transition law under a policy $\pi$ is for $B_1,B_2\in\mathcal{B}(\R)$ given by
\begin{eqnarray*}
  Q(B_1\times B_2|x,y,a) &=& \Pop\big(X_{n+1}\in B_1, Y_{n+1}\in B_2| X_n=x, Y_n=y, A_n=a\big)    \\
   &=&   \Pop\big( h(y) + \eta_{n+1} \in B_1, b(y,a) + \varepsilon_{n+1}\in B_2\big)\\
   &=& \int_{B_1} \varphi_\eta\big(w-h(y)\big)dw \int_{B_2} \varphi_\varepsilon\big(v-b(y,a))\big) dv.
\end{eqnarray*}
According to assumption (A)(v) the resulting density $q$ has to be continuous and bounded in all variables. This is for example satisfied if $b,h$ are continuous and $\varphi_\varepsilon,\varphi_\eta$ are continuous and bounded densities, like e.g. the Gaussian density.

\subsection{Total costs criterion}
In case $\beta=1$, the costs are not discounted and we minimize the utility of the total costs
$$  \sum_{k=0}^{N-1} c(X_k,Y_k,A_k).$$
In this case the $z$-component of the iteration in Theorem \ref{theo:Bellman1} b) does not change. Since in general we start with $z=1$, we can just skip it and obtain the simpler recursion for $n=0,\ldots,N-1$
\begin{eqnarray*}
  V_0(x,\mu)  &:=& \int\int U(s)\mu(dy,ds)  \\
   V_{n+1}(x,\mu) &=&  \inf_{a\in D(x)} \int_{E_X} V_{n}\Big(x',\Psi(x,a,x',\mu)\Big) Q^X(dx'|x,\mu^Y,a),\quad (x,\mu)\in E_X\times \Pop_b(E_Y\times \R_+),
\end{eqnarray*}
where $\Psi(x,a,x',\mu):= \Psi(x,a,x',\mu,1)$ from \eqref{eq:Psi}.
Indeed the $z$-component is equivalent to the knowledge of the time step but since we would like to consider a general problem it makes sense to introduce this component in the model setup in Section \ref{sec:finiteh}.

\subsection{Exponential Utility function}
In this section we assume now that the utility function has the special form $U(x) =  \frac{1}{\gamma} e^{\gamma x}$ with $\gamma \neq 0$.  This situation is often referred to as the usual risk-sensitive problem. Partially observable problems in this setting have already been considered in \cite{whi90,jjr94,hh99,dims99,s04,cchh05}. However still in this case our model is far more general than in the previous literature where the filter is derived with a change of measure technique. As we have shown in \eqref{eq:murec1} such a measure transformation is not needed for the computation of the filter.

Our aim is to specialize the value iteration from Theorem \ref{theo:Bellman1} to this case. In order to do this define for $\mu \in \Pop_b(E_Y\times \R_+)$:
\begin{equation}
    \label{eq:muhat} \hat{\mu} (B) := \frac{\int_{B}\int_{\R_+} e^{\gamma s}\mu(dy,ds)}{\int_{\R_+} e^{\gamma s} \mu^S(ds) },\quad B\in \mathcal{B}(E_Y)
\end{equation}
which obviously yields a new probability measure on $\Pop(E_Y)$.

\begin{remark}\label{rem:informvector}
From Theorem \ref{theo:muinterpr} it follows directly that $\hat{\mu}$ has a certain interpretation. We obtain for $\mu_n$ from Theorem \ref{theo:muinterpr} that
$$ \int_{B}\int_{\R_+} e^{\gamma s}\mu_n(dy,ds|h_n) = \Eop^\pi\Big[1_B(Y_n) \cdot e^{\gamma \sum_{k=0}^{n-1} \beta^k c(X_k,Y_k,A_k)} \Big|h_n\Big].$$
If $\hat{\mu}_n$ is the normalized version of this expression then it coincides with the 'information vector' defined e.g. in \cite{jjr94,cchh05}. Note that we obtain $\hat{\mu}_n$ in a very natural way as a special case of our general $\mu_n$ in Section \ref{sec:finiteh}. 
\end{remark}

Further we can write for $(x,\mu,z)\in E$:
\begin{eqnarray*}
  V_n(x,\mu,z) &=& \int_{\R_+} e^{\gamma s} \inf_{\pi} \frac{1}{\gamma} \int_{E_Y} \Eop_{xy}^{\pi}\Big[ \exp\Big( \gamma z \sum_{k=0}^{n-1} \beta^k c(X_k,Y_k,A_k)\Big)\Big] \mu(dy,ds) \\
  &=& \int_{\R_+} e^{\gamma s} \mu^S(ds) \cdot \inf_{\pi} \frac{1}{\gamma} \int_{E_Y} \Eop_{xy}^{\pi}\Big[\exp\Big( \gamma z \sum_{k=0}^{n-1} \beta^k c(X_k,Y_k,A_k)\Big) \Big] \hat{\mu}(dy)\\
  &=:& \int_{\R_+} e^{\gamma s} \mu^S(ds) \cdot \mathbf{e}_n(x,\hat{\mu},\gamma z).
\end{eqnarray*}
Using this representation, the value iteration in Theorem \ref{theo:Bellman1} can be restricted to the functions $\mathbf{e}_n$.  The state space $E_X\times \Pop(E_Y)\times (0,1]$ is much simpler because measures are only concentrated on $E_Y$.

\begin{theorem}\label{theo:exp}
\begin{itemize}
  \item[a)] For $(x,{\mu},z)\in  E_X\times \Pop(E_Y)\times (0,1]$ it holds that  $\mathbf{e}_0(x,{\mu},\gamma z)= \frac1\gamma$ and for $n=1,\ldots ,N$
  $$ \mathbf{e}_{n+1}(x,{\mu},\gamma z) = \inf_{a\in D(x)} \int_{E_X} \mathbf{e}_{n}\Big(x',{\Psi_e}(x,a,x',{\mu},z),\beta\gamma z\Big) \hat{Q}^X(dx'|x,{\mu},a,\gamma z),$$
  where for $B_1\in \mathcal{B}(E_X), B_2\in\mathcal{B}(E_Y)$
  \begin{eqnarray}
\label{eq:hatQ}   \hat{Q}^X(B_1|x,{\mu},a,z)  &:=&  \int_{B_1}\int_{E_Y} e^{z c(x,y,a)}  q^X(x'|x,{y},a) {\mu}(dy)\lambda(dx'), \\
 \label{eq:hatpsi}  {\Psi_e}(x,a,x',{\mu},z)(B_2)  &:=&  \frac{\int_{B_2}\int_{E_Y} e^{z c(x,y,a)} q(x',y'|x,y,a) {\mu}(dy) \nu(dy')}{\int_{E_Y}\int_{E_Y} e^{z c(x,y,a)} q(x',y'|x,y,a) {\mu}(dy)\nu(dy')}.
  \end{eqnarray}
The value function of \eqref{eq:unobservedMDP} is then given by $J_N(x)=\mathbf{e}_N(x,Q_0,\gamma)$.
  \item[b)] For every $n=1,\ldots ,N$ there exists a  minimizer $f^*_n\in F$ of $\mathbf{e}_{n-1}$ and $(g_0^*,\ldots,g_{N-1}^*)$ with
 $$ g_n^*(h_n) := f_{N-n}^*\big( x_n, {\mu}_n^e(\cdot |h_n),\gamma\beta^n\big),\quad n=0,\ldots,N-1$$
is an optimal policy for problem \eqref{eq:unobservedMDP} where the sequence $(\mu_n^e)$ of posterior distributions is generated by the updating operator $\Psi_e$ with $\mu_0^e:=Q_0$.
  \end{itemize}
\end{theorem}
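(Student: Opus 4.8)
The plan is to derive Theorem~\ref{theo:exp} as a direct corollary of Theorem~\ref{theo:Bellman1} by substituting the exponential utility into the value iteration and factoring out the part of the state that does not interact with the optimization. Concretely, I would start from the multiplicative decomposition
$$ V_n(x,\mu,z) = \Big(\int_{\R_+} e^{\gamma s}\mu^S(ds)\Big)\cdot \mathbf{e}_n(x,\hat\mu,\gamma z)$$
already established in the text just before the theorem, where $\hat\mu$ is the reweighted $Y$-marginal from \eqref{eq:muhat}. The goal of part~a) is then to show that the recursion $V_{n+1}=TV_n$ from Theorem~\ref{theo:Bellman1}b) is equivalent, after this substitution, to the stated recursion for $\mathbf{e}_{n}$ on the smaller state space $E_X\times\Pop(E_Y)\times(0,1]$.

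The key computational step is to track how the normalizing factor $\int e^{\gamma s}\mu^S(ds)$ transforms under one application of the operator $\Psi$ and the kernel $Q^X$. Plugging $\mu'=\Psi(x,a,x',\mu,z)$ from \eqref{eq:Psi} into $\int e^{\gamma s'}(\mu')^S(ds')$ and using that $\delta_{s+zc(x,y,a)}$ shifts the cost, one finds that the numerator picks up a factor $e^{\gamma z c(x,y,a)}$ inside the $\mu$-integral; combining this with $Q^X(dx'|x,\mu^Y,a)$ produces exactly the un-normalized kernel $\hat Q^X(\cdot|x,\hat\mu,a,\gamma z)$ of \eqref{eq:hatQ} (up to the common factor $\int e^{\gamma s}\mu^S(ds)$ which factors out of the whole expression), while the renormalization of the $Y$-part of $\Psi$ against this same weight is precisely $\Psi_e$ of \eqref{eq:hatpsi}. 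Thus each term $V_n(x',\Psi(x,a,x',\mu,z),\beta z)$ in the Bellman operator equals $\big(\int e^{\gamma s}\mu^S(ds)\big)$ times $\mathbf{e}_n\big(x',\Psi_e(x,a,x',\hat\mu,z),\beta\gamma z\big)$ weighted by $\hat Q^X$; since the constant factor is independent of $a$ it comes out of the infimum, and dividing through yields the claimed recursion for $\mathbf{e}_{n+1}$. The base case is immediate: $V_0(x,\mu,z)=\int U(s)\mu(dy,ds)=\frac1\gamma\int e^{\gamma s}\mu^S(ds)$, so $\mathbf{e}_0\equiv\frac1\gamma$. The identity $J_N(x)=V_N(x,Q_0\otimes\delta_0,1)=\mathbf{e}_N(x,Q_0,\gamma)$ follows because $(Q_0\otimes\delta_0)^S=\delta_0$ has $\int e^{\gamma s}\delta_0(ds)=1$ and $\widehat{Q_0\otimes\delta_0}=Q_0$.

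For part~b), I would invoke Theorem~\ref{theo:Bellman1}c) directly: a minimizer $f_n^*$ of $V_{n-1}$ exists, and because of the multiplicative decomposition the minimizing action at $(x,\mu,z)$ depends on $\mu$ only through $\hat\mu$, so $f_n^*$ descends to a minimizer of $\mathbf{e}_{n-1}$ on the reduced state space. The optimal policy $(g_0^*,\dots,g_{N-1}^*)$ is then read off exactly as in Theorem~\ref{theo:Bellman1}c), with $\mu_n(\cdot|h_n)$ replaced by its normalized $Y$-marginal $\mu_n^e(\cdot|h_n)$; one checks by the same induction as in the Proposition of Section~\ref{subsec:observe} that applying $\Psi$ and then normalizing commutes with running the reduced operator $\Psi_e$, so that $\widehat{\mu_n}=\mu_n^e$ along every history, and the discount bookkeeping $\beta^n$ becomes $\gamma\beta^n$ because $\mathbf{e}_n$ carries the argument $\gamma z$ rather than $z$.

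The only genuine obstacle is the bookkeeping in the central computation: one must be careful that the weight $e^{\gamma z c(x,y,a)}$ attaches to the correct variable (the current $(x,y,a)$, with the current discount level $z=\beta^n$), that it is the \emph{un-normalized} kernel $\hat Q^X$ that appears in the recursion while the renormalization is absorbed into $\Psi_e$, and that the factor pulled outside the infimum is literally $\int e^{\gamma s}\mu^S(ds)$ and nothing more. Once the single-step identity
$$\int_{\R_+}e^{\gamma s'}\big(\Psi(x,a,x',\mu,z)\big)^S(ds') \cdot \int_{E_Y}q^X(x'|x,\mu^Y,a)\,\text{(\dots)} \;=\; \Big(\int_{\R_+}e^{\gamma s}\mu^S(ds)\Big)\,\hat Q^X\text{-density}$$
is pinned down precisely, everything else is a routine induction on $n$ using monotonicity and the already-established lower semicontinuity and existence of minimizers from Theorem~\ref{theo:Bellman1}. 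No new continuity or compactness arguments are needed, since the reduced model is a measurable image of the one treated there.
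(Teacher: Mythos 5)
Your proposal is correct and follows essentially the same route as the paper's proof: both start from the multiplicative decomposition $V_n(x,\mu,z)=\int e^{\gamma s}\mu^S(ds)\cdot\mathbf{e}_n(x,\hat\mu,\gamma z)$, substitute it into the Bellman recursion of Theorem \ref{theo:Bellman1}, track the weight $e^{\gamma z c(x,y,a)}$ through $\Psi$ and $Q^X$ to produce $\hat Q^X$ and $\Psi_e$ (i.e. verify $\hat\Psi(x,a,x',\mu,z)=\Psi_e(x,a,x',\hat\mu,\gamma z)$), factor the constant out of the infimum, and deduce part b) from Theorem \ref{theo:Bellman1}c) together with the identification $\widehat{\mu_n}=\mu_n^e$ (the paper's Remark \ref{rem:psie}).
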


\begin{proof}
Let $(x,{\mu},z) \in E$.
On one hand we have that
$$ {V}_{n+1}(x,{\mu},z) = \int_{\R_+} e^{\gamma s} \mu^S(ds) \cdot \mathbf{e}_{n+1}(x,\hat{\mu},\gamma z),$$
on the other hand we have by Theorem \ref{theo:Bellman1}:
\begin{eqnarray*}
&&  {V}_{n+1}(x,{\mu},z) = \inf_{a\in D(x)} \int_{E_X} V_{n}\Big(x',\Psi(x,a,x',\mu,z),\beta z\Big) Q^X(dx'|x,\mu^Y,a) \\
   &=& \inf_{a\in D(x)} \int_{E_X} \int_{\R_+} e^{\gamma s'} \Psi^S(x,a,x',\mu,z)(ds')\cdot \mathbf{e}_{n}\Big(x',\hat{\Psi}(x,a,x',{\mu},z),\beta\gamma z\Big) Q^X(dx'|x,\mu^Y,a) \\
   &=& \inf_{a\in D(x)} \int_{E_X}\int_{E_Y}\int_{E_Y}\int_{\R_+} e^{\gamma s+\gamma z c(x,y,a)} q(x',y'|x,y,a) \mu(dy,ds) \nu(dy')\cdot \\ && \hspace*{3cm}\mathbf{e}_{n}\Big(x',\hat{\Psi}(x,a,x',{\mu},z),\beta\gamma z\Big) \lambda(dx') \\
   &=&  \int_{\R_+} e^{\gamma s} \mu^S(ds) \cdot\\
   && \inf_{a\in D(x)} \int_{E_X}\int_{E_Y}\int_{E_Y} e^{\gamma z c(x,y,a)} q(x',y'|x,y,a) \hat{\mu}(dy) \nu(dy') \mathbf{e}_{n}\Big(x',\hat{\Psi}(x,a,x',{\mu},z),\beta\gamma z\Big) \lambda(dx')\\
   &=&  \int_{\R_+} e^{\gamma s} \mu^S(ds) \cdot \inf_{a\in D(x)} \int_{E_X} \mathbf{e}_{n}\Big(x',\hat{\Psi}(x,a,x',{\mu},z),\beta\gamma z\Big) \hat{Q}^X(dx'|x,\hat{\mu},a,\gamma z).
\end{eqnarray*}
It remains to show that $ \hat{\Psi}(x,a,x',{\mu},z)={\Psi_e}(x,a,x',\hat{\mu},\gamma z)$ which is defined in \eqref{eq:hatpsi}.
We obtain for $B\in\mathcal{B}(E_Y)$:
\begin{eqnarray*}
 \hat{\Psi}(x,a,x',{\mu},z)(B)   &=& \frac{\int_{B}\int_{\R_+} e^{\gamma s'} {\Psi}(x,a,x',\mu,z)(dy',ds') }{\int_{E_Y}\int_{\R_+} e^{\gamma s'} {\Psi}(x,a,x',\mu,z)(dy',ds') } \\
   &=& \frac{\int_{B}\int_{E_Y}\int_{\R_+} q(x',y'|x,y,a)  e^{\gamma s+\gamma z c(x,y,a)} \mu(dy,ds) \nu(dy') }{\int_{E_Y}\int_{E_Y}\int_{\R_+} q(x',y'|x,y,a)  e^{\gamma s+\gamma z c(x,y,a)} \mu(dy,ds) \nu(dy')} \\
   &=&  \frac{\int_{B}\int_{E_Y} q(x',y'|x,y,a)  e^{\gamma z c(x,y,a)} \hat{\mu}(dy) \nu(dy') }{\int_{E_Y}\int_{E_Y} q(x',y'|x,y,a)  e^{\gamma z c(x,y,a)} \hat{\mu}(dy) \nu(dy')}\\
   &=& {\Psi_e}(x,a,x',\hat{\mu},\gamma z)(B).
\end{eqnarray*}
Hence part a) is shown. Part b) follows as in Theorem \ref{theo:Bellman1} c).
\end{proof}

\begin{remark}\label{rem:psie}
If $(\mu_n)$ is generated by $\Psi$ with $\mu_0:=Q_0\otimes \delta_0$ (note that $\mu_n$ are probability measures on $\mathcal{B}(E_Y\times \R_+)$), then
$\hat{\mu}(\cdot |h_n)= \mu_n^e(\cdot |h_n)$, i.e., $(\mu_n^e)$ is the sequence of information vectors (see Remark  \eqref{rem:informvector}). The statement follows directly from the proof of the previous theorem.
\end{remark}

\subsection{Power Utility function}
In this section we assume that the utility function has the special form $U(x) = \frac{1}{\gamma}x^\gamma$ with $\gamma\neq 0$. Thus, we obtain:
\begin{eqnarray*}
&&  V_{n}(x,\mu,z) = \inf_{\pi} \frac{1}{\gamma}\int_{E_Y}\int_{\R_+} \Eop^\pi_{xy} \Big[\big( s+ z \sum_{k=0}^{n-1} \beta^k c(X_k,Y_k,A_k)\big)^\gamma\Big]\mu(dy,ds)\\
  &=& z^\gamma \inf_{\pi} \frac{1}{\gamma} \int_{E_Y}\int_{\R_+} \Eop^\pi_{xy} \Big[\big( \frac{s}{z}+ \sum_{k=0}^{n-1} \beta^k c(X_k,Y_k,A_k)\big)^\gamma\Big]\mu(dy,ds)\\
  &=& z^\gamma \inf_{\pi} \frac{1}{\gamma} \int_{E_Y}\int_{\R_+} \Eop^\pi_{xy} \Big[\big( \tilde{s}+ \sum_{k=0}^{n-1} \beta^k c(X_k,Y_k,A_k)\big)^\gamma\Big]\tilde{\mu}(dy,d\tilde{s})\\
  &=:& z^\gamma d_{n}(x,\tilde{\mu}),
\end{eqnarray*}
where $\tilde{\mu}$ is defined by $\tilde{\mu}(B_1\times B_2) := \mu( B_1 \times \frac{1}z B_2)$ for $B_1\times B_2\in \mathcal{B}(E_Y\times\R_+)$. Hence $\tilde{\mu}\in \Pop_b(E_Y\times \R_+)$.

\begin{theorem}\label{theo:power}
\begin{itemize}
  \item[a)] For $(x,{\mu})\in E_X\times \Pop_b(E_Y\times \R_+)$ it holds  $d_0(x,{\mu}) := \frac1\gamma \int\int s^\gamma \mu(dy,ds)$ and for $n=1,\ldots ,N$
  $$ d_{n+1}(x,{\mu}) = \inf_{a\in D(x)} \beta^\gamma \int_{E_X} d_{n}\Big(x',{\Psi_p}(x,a,x',{\mu})\Big) {Q}^X(dx'|x,{\mu},a),$$
  where for $B \in \mathcal{B}(E_Y\times\R_+)$
  \begin{eqnarray*}
   {\Psi_p}(x,a,x',{\mu})(B)  &:=&  \frac{\int_{E_Y}\int_{\R_+}\Big(\int_{B} q(x',y'|x,y,a) \nu(dy')\delta_{\frac{s+c(x,y,a)}{\beta}}(ds') \Big) {\mu}(dy,ds) }{\int_{E_Y}  q^X(x'|x,y,a) {\mu}^Y(dy) }.
  \end{eqnarray*}
The value function of \eqref{eq:unobservedMDP} is then  given by $J_N(x)=d_N(x,Q_0\otimes \delta_0)$.
  \item[b)] For every $n=1,\ldots ,N$ there exists a  minimizer $f^*_n\in F$ of $d_{n-1}$ and $(g_0^*,\ldots,g_{N-1}^*)$ with
 $$ g_n^*(h_n) := f_{N-n}^*\big( x_n, {\mu}_n^p(\cdot |h_n)\big),\quad n=0,\ldots,N-1$$
is an optimal policy for problem \eqref{eq:unobservedMDP},  where the sequence $(\mu_n^p)$ is generated by $\Psi_p$ with $\mu_0^p:=Q_0\otimes \delta_0$.
  \end{itemize}
\end{theorem}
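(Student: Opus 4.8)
The plan is to obtain Theorem \ref{theo:power} as a specialization of the Bellman equation in Theorem \ref{theo:Bellman1}, exploiting the homogeneity $U(\lambda x)=\lambda^\gamma U(x)$ of the power utility. This homogeneity has already been used just above to write $V_n(x,\mu,z)=z^\gamma d_n(x,\tilde\mu)$, with $\tilde\mu$ the rescaling of $\mu$ in the cost component introduced before the statement. The base case is then immediate: $d_0(x,\mu)=V_0(x,\mu,1)=\int\int U(s)\,\mu(dy,ds)=\frac1\gamma\int\int s^\gamma\,\mu(dy,ds)$, and likewise $J_N(x)=V_N(x,Q_0\otimes\delta_0,1)=d_N(x,Q_0\otimes\delta_0)$ since the rescaling is trivial when $z=1$.

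For the recursion I would start from Theorem \ref{theo:Bellman1}(b),
\[
V_{n+1}(x,\mu,z)=\inf_{a\in D(x)}\int_{E_X}V_n\big(x',\Psi(x,a,x',\mu,z),\beta z\big)\,Q^X(dx'\,|\,x,\mu^Y,a),
\]
and substitute $V_{n+1}(x,\mu,z)=z^\gamma d_{n+1}(x,\tilde\mu)$ on the left and $V_n\big(x',\Psi(x,a,x',\mu,z),\beta z\big)=(\beta z)^\gamma d_n\big(x',\widetilde{\Psi(x,a,x',\mu,z)}\big)$ on the right, where the tilde on $\Psi$ denotes the rescaling of its output by the factor $\beta z$. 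After dividing by $z^\gamma$, extracting the constant $(\beta z)^\gamma/z^\gamma=\beta^\gamma$, and using that rescaling the $S$-component leaves the $Y$-marginal untouched so that $Q^X(\cdot\,|\,x,\mu^Y,a)=Q^X(\cdot\,|\,x,\tilde\mu^Y,a)$, everything reduces to the identity $\widetilde{\Psi(x,a,x',\mu,z)}=\Psi_p(x,a,x',\tilde\mu)$, which is the power-utility analogue of the identity $\hat\Psi=\Psi_e$ in the proof of Theorem \ref{theo:exp}. This is a direct change of variables: inserting the definition \eqref{eq:Psi} of $\Psi$ into $\int g\,d\widetilde{\Psi(\cdots)}=\int g\big(y',s'/(\beta z)\big)\,\Psi(\cdots)(dy',ds')$, collapsing the Dirac mass $\delta_{s+zc(x,y,a)}$, and writing $\frac{s+zc(x,y,a)}{\beta z}=\frac{(s/z)+c(x,y,a)}{\beta}$ turns $\mu$ into $\tilde\mu$ and produces exactly the Dirac mass $\delta_{(s+c(x,y,a))/\beta}$ of $\Psi_p$; the denominators match the same way. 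Since for fixed $z>0$ the map $\mu\mapsto\tilde\mu$ is a bijection of $\Pop_b(E_Y\times\R_+)$ onto itself, the resulting equation $d_{n+1}(x,\tilde\mu)=\beta^\gamma\inf_{a}\int d_n(x',\Psi_p(x,a,x',\tilde\mu))\,Q^X(dx'\,|\,x,\tilde\mu^Y,a)$ holds for every argument, which is part a).

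For part b) I would argue as in the proof of Theorem \ref{theo:Bellman1}(c). Applying the identity above along a trajectory with $z=\beta^n$ shows that the forward filter $(\mu_n^p)$ generated by $\Psi_p$ from $\mu_0^p=Q_0\otimes\delta_0$ is precisely the rescaling of $\mu_n(\cdot\,|\,h_n)$ by $\beta^n$ (the analogue of Remark \ref{rem:psie} for the exponential case); hence minimizing the inner integral of the $V$-recursion at a history $h_n$ coincides, up to the positive factor $(\beta^{n+1})^\gamma$, with minimizing the inner integral of the $d$-recursion at $(x_n,\mu_n^p(\cdot\,|\,h_n))$, so a minimizer $f^*_{N-n}$ of $d_{N-n-1}$ yields the stated optimal policy. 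Existence of these minimizers follows from Proposition 2.4.3 in \cite{br11} once one has weak continuity of $Q^X(\cdot\,|\,x,\mu^Y,a)$ and continuity of $\Psi_p$ at the points where its denominator is positive; both are proved verbatim as in Theorem \ref{theo:Bellman1}, since $\Psi_p$ differs from $\Psi$ only through the bounded continuous weight $c(x,y,a)$ entering the deterministic shift.

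I expect the only delicate point to be the bookkeeping of the three scales present at once — $z$ inside $\tilde\mu$, $\beta z$ inside $\widetilde{\Psi(\cdots)}$, and the $z$-free operator $\Psi_p$ — and specifically verifying that they conspire so that $\Psi_p$ carries no residual $z$-dependence; everything else is routine manipulation already modeled by the proof of Theorem \ref{theo:exp}.
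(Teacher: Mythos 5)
Your proposal is correct and follows essentially the same route as the paper: specialize the Bellman recursion of Theorem \ref{theo:Bellman1} via the homogeneity identity $V_n(x,\mu,z)=z^\gamma d_n(x,\tilde\mu)$, reduce everything to the change-of-variables identity $\widetilde{\Psi(x,a,x',\mu,z)}=\Psi_p(x,a,x',\tilde\mu)$, and deduce part b) as in Theorem \ref{theo:Bellman1}c). The extra details you supply (invariance of the $Y$-marginal under the rescaling, positivity of the factor $\beta^\gamma$, continuity of $\Psi_p$) are correct and merely make explicit what the paper leaves implicit.
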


\begin{proof}
On one hand we have shown
$$  V_{n+1}(x,\mu,z) = z^\gamma d_{n+1}(x,\tilde{\mu}).$$
On the other hand we obtain with Theorem \ref{theo:Bellman1}
\begin{eqnarray*}
  &&{V}_{n+1}(x,{\mu},z) = \inf_{a\in D(x)}\int_{E_X} V_{n}\Big(x',\Psi(x,a,x',\mu,z),\beta z\Big) Q^X(dx'|x,\mu^Y,a)\\
   &=& \inf_{a\in D(x)} \beta^\gamma z^\gamma \int_{E_X} d_{n}\Big(x',\tilde{\Psi}(x,a,x',\mu,z),\beta z\Big) Q^X(dx'|x,\mu^Y,a).
\end{eqnarray*}
It remains to show that $\tilde{\Psi}(x,a,x',\mu,z) = {\Psi_p}(x,a,x',\tilde{\mu})$.

Here we obtain for $B\in\mathcal{B}(E_Y\times\R_+)$:
\begin{eqnarray*}
   \tilde{\Psi}(x,a,x',{\mu},z)(B)  &=&  \frac{\int_{E_Y}\int_{\R_+}\Big(\int_{B} q(x',y'|x,y,a) \nu(dy')  \delta_{\frac{\frac{s}{z}+c(x,y,a)}{\beta}}(ds')\Big) \mu(dy,ds)  }{\int_{E_Y}\int_{\R_+}\int_{\R_+}\int_{E_Y} q(x',y'|x,y,a) \nu(dy') \delta_{\frac{\frac{s}{z}+c(x,y,a)}{\beta}}(ds') \mu(dy,ds) } \\
   &=&  \frac{ \int_{E_Y}\int_{\R_+}\Big(\int_{B} q(x',y'|x,y,a)  \nu(dy')\delta_{\frac{\tilde{s}+c(x,y,a)}{\beta}}(ds')\Big) \tilde{\mu}(dy,d\tilde{s})  }{\int_{E_Y}\int_{\R_+}\int_{\R_+} \int_{E_Y} q(x',y'|x,y,a)  \nu(dy') \delta_{\frac{\tilde{s}+c(x,y,a)}{\beta}}(ds') \tilde{\mu}(dy,d\tilde{s})}\\
   &=& {\Psi_p}(x,a,x',\tilde{\mu})(B).
\end{eqnarray*}
Hence part a) is shown. Part b) follows as in Theorem \ref{theo:Bellman1} c).
\end{proof}

\begin{remark}
If $(\mu_n)$ is generated by $\Psi$ with $\mu_0:=Q_0\otimes \delta_0$, then
$\tilde{\mu}(\cdot |h_n)= \mu_n^p(\cdot |h_n)$. The statement follows directly from the proof of the previous theorem.

\end{remark}

\begin{remark}
Note that the special case $U(x)=\log(x)$ can be treated similar. It can also be obtained from the power utility case by letting $\gamma\to 0$.
\end{remark}

\begin{remark}
Also the updating operators $\Psi_e$ and $\Psi_p$ simplify considerably if the cost function $c(x,y,a)$ is independent of $y$ (see Section \ref{subsec:observe}).
\end{remark}

\section{Application: Risk-Sensitive Bayesian House Selling Problem}\label{sec:casino}
As an application we consider a risk-sensitive Bayesian extension of the classical house selling problem with finite time horizon. We assume that offers for a house $X_0,\ldots,X_N$ arrive independently and are identically distributed with distribution $Q_\theta$.  Here $\theta\in \Theta$ is an unknown parameter and $\Theta$ is assumed to be a Borel space. Further we assume that $Q_\theta$ has a $\lambda$-density $q(x|\theta)$ which is continuous in both parameters with compact support. A prior distribution $Q_0$ for $\theta$ is given. As long as offers are rejected an observation cost of $c_\theta>0$ has to be paid which also depends on $\theta$ and cannot be observed. We suppose that $c_\theta$ is continuous in $\theta$. When an offer is accepted, the price is obtained and the process ends. If one has not stopped before $N$, the last offer has to be accepted. The aim is to find the maximal risk-sensitive stopping reward
\begin{equation}\label{BReq:H}
    J_N(x) := \sup_{0\le \tau \le N}  \int_{\Theta} \Eop_{x\theta}\Big[U\Big(X_{\tau}-c_\theta \tau \Big)   \Big] Q_0(d\theta)
\end{equation}
where the supremum is taken over all stopping times $\tau$. Here we assume that $U:\R\to\R$ is strictly increasing and concave. In order to have a well-defined problem we also assume that $\sup_\theta \Eop_\theta[X_1^+]<\infty$. This risk-sensitive Bayesian house selling problem can be solved in a similar way as our general model with $Y_n\equiv \theta$ and $E_Y=\Theta$, i.e., the unobservable component is simply the unknown parameter and $c(x,\theta)=c_\theta$ (independent of $x$). However note that we also have a terminal reward in case we have not stopped before which equals the last offer. Risk-sensitive house selling problems with complete observation have been treated in \cite{m00}. A  risk-sensitive Bayesian house selling problem has been considered in \cite{br15} however with fixed observation costs $c$ (independent of $\theta$). We define the updating operator $\Psi$ for the joint conditional probability of the unknown parameter $\theta$ and the accumulated cost so far only in case we do not stop because otherwise the problem ends immediately. Also note that since $\beta=1$ we can skip the $z$-component in the state space. Moreover, the i.i.d. assumption on the offers implies that $\Psi$ does not depend on $x$ which is the previous offer. The updating operator is given by
$$ \Psi(x',\mu) (B_1\times B_2) = \frac{\int_{B_1}\int_{\R_-} q(x'|\theta) \delta_{s-c_\theta}(B_2) \mu(d\theta,ds)}{\int_\Theta q(x'|\theta) \mu^\Theta(d\theta)},\quad B_1\times B_2\in\mathcal{B}(\Theta\times\R_-).$$
According to Theorem \ref{theo:Bellman1} we obtain $J_N$ by computing the functions $V_n$. These are given by
 \begin{eqnarray*}
    V_0(x,\mu) &=& \int U\big(x+s\big)\mu^S(ds) =: U_\mu(x) \\
    V_n(x,\mu) &=& \max\Big\{ U_\mu(x), d_n(\mu) \Big\}
  \end{eqnarray*}
with $ d_n(\mu):=\int_{\R} V_{n-1}\Big(x',\Psi(x',\mu)\Big) Q^X(dx'|\mu^\Theta)$. We have that $J_N(x)=V_N(x,Q_0\otimes \delta_0)$. Note that $Q^X(dx'|\mu^\Theta)$ is given by
$$ Q^X(B|\mu^\Theta) = \int_B\int_\Theta q(x'|\theta) \mu^\Theta(d\theta)\lambda(dx'),\quad B\in\mathcal{B}(E_X).$$ When we define $f_n^*(x,\mu)=stop$ if $U_\mu(x)\ge d_n(\mu)$ and $(g_0^*,\ldots,g_{N-1}^*)$ by
 $$ g_n^*(h_n) := f_{N-n}^*\big( x_n, \mu_n(\cdot|h_n)\big),\quad h_n=(x_0,x_1,\ldots,x_n),\quad n=0,\ldots,N-1,$$
then the optimal stopping time for problem \eqref{BReq:H} is given by
 $$ \tau^* := \inf\{ n\in\N_0 : g_n^*(h_n) = stop\}\wedge N.$$
 Let us now further investigate the optimal stopping time $\tau^*$. As in Section \ref{sec:finiteh} we define by $\mu_n(\cdot| h_n)$ the sequence of conditional probabilities generated by the updating-operator. Then we have
 $$ g_n^*(h_n)= stop \quad \Leftrightarrow \quad U_{\mu_n(\cdot|h_n)}(x_n) \ge d_{N-n}(\mu_n(\cdot|h_n)).$$
Since $x\mapsto U_\mu(x)$ is increasing and continuous, the inverse function $U^{-1}_{\mu}$ exists and we obtain
 $$ g_n^*(h_n)= stop \quad \Leftrightarrow \quad x_n \ge U_{\mu_n(\cdot|h_n)}^{-1}\big(d_{N-n}(\mu_n(\cdot|h_n))\big) =: x_{n,N}^*(\mu_n(\cdot|h_n)).$$
We call $x_{n,N}^*(\cdot)$ {\em reservation level}. The reservation levels depend on $\mu_n$ and $U$. The optimal stopping time is hence the first time, the offer exceeds the corresponding, history dependent reservation level.

\begin{theorem}\label{BRtheo:xrec}
\begin{itemize}
  \item[a)] The optimal stopping time for the risk-sensitive Bayesian house selling problem is given by
$$\tau^* = \inf\big\{ n\in\N_0 : X_n \ge x_{n,N}^*(\mu_n(\cdot|h_n))\big\}\wedge N.$$
  \item[b)] The reservation levels can recursively be computed by
  \begin{eqnarray*}
    x_{N-1,N}^*(\mu_{N-1}) &=& U_{\mu_{N-1}}^{-1} \circ \int_\R  \int_\R U(x+s) \mu^S_{N-1}(ds) Q^X(dx|\mu^\Theta_{N-1})   \\
    x_{n,N}^*(\mu_n) &=&  U_{\mu_n}^{-1} \circ \int_\R U_{\Psi(x,\mu_n)}\Big(\max\big\{ x, x_{n+1,N}^*(\Psi(x,\mu_n))\big\}\Big)Q^X(dx|\mu_n^\Theta).
  \end{eqnarray*}
\end{itemize}
\end{theorem}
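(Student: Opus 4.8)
The plan is to derive both parts directly from the specialization of Theorem \ref{theo:Bellman1} to the house selling model that was carried out just above the statement. Part (a) is essentially a restatement of what has already been shown: we have established that $J_N(x) = V_N(x,Q_0\otimes\delta_0)$ with $V_n(x,\mu) = \max\{U_\mu(x), d_n(\mu)\}$, that the minimizer $f_n^*$ of $V_{n-1}$ chooses $stop$ exactly when $U_\mu(x)\ge d_n(\mu)$, and that the induced history-dependent policy $(g_0^*,\dots,g_{N-1}^*)$ with $g_n^*(h_n) = f_{N-n}^*(x_n,\mu_n(\cdot|h_n))$ is optimal. Since $x\mapsto U_\mu(x)$ is continuous and strictly increasing (because $U$ is), its inverse $U_\mu^{-1}$ exists on the relevant range, so the stopping condition $U_{\mu_n(\cdot|h_n)}(x_n)\ge d_{N-n}(\mu_n(\cdot|h_n))$ is equivalent to $x_n\ge x_{n,N}^*(\mu_n(\cdot|h_n))$ with $x_{n,N}^*(\mu) := U_\mu^{-1}(d_{N-n}(\mu))$. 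Plugging this characterization into $\tau^* = \inf\{n : g_n^*(h_n)=stop\}\wedge N$ gives the formula in (a). One subtlety worth a sentence: one must note that for the terminal stage the last offer is accepted regardless, which is consistent since $V_0(x,\mu)=U_\mu(x)$ means $f_0^*$ always stops; this is why the $\wedge N$ appears and why no reservation level is needed at stage $N$.

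For part (b), I would unwind the recursion $d_n(\mu) = \int_\R V_{n-1}(x',\Psi(x',\mu)) \, Q^X(dx'|\mu^\Theta)$ together with $V_{n-1}(x',\nu) = \max\{U_\nu(x'), d_{n-1}(\nu)\}$ and the definition $x_{n,N}^*(\mu) = U_\mu^{-1}(d_{N-n}(\mu))$. Start with the base case: $x_{N-1,N}^*(\mu_{N-1}) = U_{\mu_{N-1}}^{-1}(d_1(\mu_{N-1}))$, and $d_1(\mu_{N-1}) = \int_\R V_0(x',\Psi(x',\mu_{N-1}))\,Q^X(dx'|\mu_{N-1}^\Theta) = \int_\R U_{\Psi(x',\mu_{N-1})}(x')\,Q^X(dx'|\mu_{N-1}^\Theta)$. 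Here I would observe that $U_{\Psi(x',\mu)}(x') = \int_\R U(x'+s')\,\Psi(x',\mu)^S(ds')$, and by the definition of $\Psi$ the pushforward of the $S$-marginal simply shifts $s$ to $s-c_\theta$ weighted by the posterior on $\theta$; carrying this out one checks that $\int_\R U_{\Psi(x',\mu_{N-1})}(x')\,Q^X(dx'|\mu_{N-1}^\Theta)$ equals $\int_\R\int_\R U(x+s)\,\mu_{N-1}^S(ds)\,Q^X(dx|\mu_{N-1}^\Theta)$ — the two normalizing constants (the denominator of $\Psi$ and the $Q^X$-density) cancel exactly as in the proof of Theorem \ref{theo:muinterpr}. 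This yields the stated base case.

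For the inductive step I would write $d_{N-n}(\mu_n) = \int_\R V_{N-n-1}(x,\Psi(x,\mu_n))\,Q^X(dx|\mu_n^\Theta)$ and substitute $V_{N-n-1}(x,\Psi(x,\mu_n)) = \max\{U_{\Psi(x,\mu_n)}(x),\, d_{N-n-1}(\Psi(x,\mu_n))\}$. Now using $d_{N-n-1}(\nu) = U_\nu(x_{n+1,N}^*(\nu))$ (which is just the definition of the reservation level rearranged, valid since $U_\nu$ is increasing and continuous so that $U_\nu\circ U_\nu^{-1}$ is the identity on the range of $d_{N-n-1}$), and using monotonicity of $U_{\Psi(x,\mu_n)}$ to pull the max inside, one gets $\max\{U_{\Psi(x,\mu_n)}(x), U_{\Psi(x,\mu_n)}(x_{n+1,N}^*(\Psi(x,\mu_n)))\} = U_{\Psi(x,\mu_n)}(\max\{x, x_{n+1,N}^*(\Psi(x,\mu_n))\})$. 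Integrating against $Q^X(dx|\mu_n^\Theta)$ and applying $U_{\mu_n}^{-1}$ to both sides gives the claimed recursion for $x_{n,N}^*(\mu_n)$. The main obstacle — really the only non-routine point — is justifying the interchange that makes the $\Psi$-normalization cancel against the $Q^X$-density when the reservation level inside the max depends on $\Psi(x,\mu_n)$; but since $x_{n+1,N}^*$ depends on $\Psi(x,\mu_n)$ only through quantities that are themselves ratios with the same denominator, and since everything is bounded on the compact supports involved, Fubini applies just as in Theorem \ref{theo:muinterpr}, and no measurability or integrability difficulty arises. A closing remark should note that $x_{n+1,N}^*$ is measurable in its argument (inherited from lower semicontinuity of $V_{n+1}$ and continuity of $\Psi$), so the integrals in the recursion are well defined.
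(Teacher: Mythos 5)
Your argument is correct and follows essentially the same route as the paper: part (a) is read off from the already-established optimal stopping rule via the strict monotonicity of $U_\mu$, and part (b) unwinds $d_{N-n}(\mu)=\int V_{N-n-1}(x,\Psi(x,\mu))\,Q^X(dx|\mu^\Theta)$ and pulls the maximum inside $U_{\Psi(x,\mu)}$ using its monotonicity, exactly as in the paper's proof. Your write-up is merely more explicit than the paper about the cancellation of the normalizing constants and about measurability of the reservation levels.
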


\begin{proof}
Part a) is clear from the definition and the previous results. Part b) can be shown by inserting the correct definitions. For $n=N-1$ we obtain from the definition of $x_{N-1,N}^*(\mu) $ that
$$x_{N-1,N}^*(\mu) = U_{\mu}^{-1} \big( d_1(\mu)\big)$$ with $$ d_1(\mu)= \int V_0\big(x,\Psi(x,\mu)\big) Q^X(dx|\mu^\Theta).$$
For $ x_{n,N}^*$ we obtain by definition:
$$ x_{n,N}^*(\mu)= U_\mu^{-1}\big(d_{N-n}(\mu)\big).$$
Further $d_{N-n}(\mu_n)$ can be written as
\begin{eqnarray*}
&&  d_{N-n}(\mu_n) =  \int_{\R} V_{N-n-1}\Big(x,\Psi(x,\mu_n),\Big) Q^X(dx|\mu_n^\Theta)\\
   &=& \int_{\R} \max\Big\{ U_{\Psi(x,\mu_n)}(x), d_{N-n-1}(\Psi(x,\mu_n))\Big\}Q^X(dx|\mu_n^\Theta)\\
 &=& \int_{\R} U_{\Psi(x,\mu_n)}\Big(\max\big\{ x, U_{\Psi(x,\mu_n)}^{-1}\circ d_{N-n-1}(\Psi(x,\mu_n))\big\}\Big)Q^X(dx|\mu_n^\Theta)
\end{eqnarray*}
and the statement follows from the definition of $x_{n,N}^*$.
\end{proof}

\section{Infinite Horizon Problems}
Here we consider an infinite time horizon and  $\beta \in (0,1)$, i.e., we are interested in
\begin{equation}\label{eq:Jinfty}
    J_\infty(x) := \inf_{\sigma\in\Pi} \int_{E_Y}\Eop_{xy}^\sigma\Big[U\Big(\sum_{k=0}^{\infty} \beta^k c(X_k,Y_k,A_k)\Big)\Big] Q_0(dy),\quad x\in E.
\end{equation}
We will consider concave and convex utility functions separately.

\subsection{Concave Utility Function}
We first investigate the case of a concave utility function $U:\R_+\to\R$. This situation represents a risk seeking decision maker.

In this subsection we use the following notations
\begin{eqnarray}
 \nonumber V_{\infty\sigma}(x,\mu,z) &:=&  \int_{E_Y}\int_{\R_+}\Eop^\sigma_{xy} \Big[U\Big(s+z\sum_{k=0}^\infty \beta^k c(X_k,Y_k,A_k)\Big)\Big]\mu(ds,dy),\\
\label{eq:prob4}  V_\infty(x,\mu,z) &:=& \inf_{\sigma\in\Pi} V_{\infty\sigma}(x,\mu,z),\quad \quad (x,\mu,z)\in {E}.\end{eqnarray}
We are interested in obtaining $V_\infty(x,Q_0\otimes \delta_0,1)=J_\infty(x)$. For a stationary policy $\pi=(f,f,\ldots)\in\Pi^M$ we write $V_{\infty\pi}=V_{f}$ and denote 
\begin{eqnarray*}
  \bar{b}(\mu,z) &:=& \int_{\R_+} U\Big(s+\frac{z\bar{c}}{1-\beta}\Big)\mu^S(ds),\\
  \underline{b}(\mu,z) &:=& \int_{\R_+} U\Big(s+\frac{z\underline{c}}{1-\beta}\Big)\mu^S(ds),\quad (\mu,z)\in\Pop_b(E_Y\times \R_+)\times[0,1].
\end{eqnarray*}

Then we obtain the main theorem of this section:

\begin{theorem}\label{theo:limit2}
The following statements hold true:
\begin{itemize}
  \item[a)] $V_\infty$ is the unique solution of $v= T v$ in $\mathcal{C}({E})$ with $\underline{b}(\mu,z)\le v(x,\mu,z)\le \bar{b}(\mu,z)$ for $T$ defined in \eqref{eq:Top}. Moreover, $T^n V_0 \uparrow V_\infty, T^n \underline{b}\uparrow V_\infty$ and $T^n \bar{b} \downarrow V_\infty$ for $n\to\infty$. The value function of \eqref{eq:Jinfty} is given by $J_\infty(x) = V_\infty(x,Q_0\otimes\delta_0,1)$.
  \item[b)] There exists a minimizer $f^*$ of $V_\infty$ and $(g_0^*,g_1^*,\ldots )$ with
 $$ g_n^*(h_n) := f^*\Big(x_n, \mu_n(\cdot|h_n),\beta^n \Big)$$ is an optimal policy for \eqref{eq:Jinfty}.
\end{itemize}
\end{theorem}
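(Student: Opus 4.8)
The plan is to exploit the finite-horizon machinery of Theorem~\ref{theo:Bellman1} together with the boundedness of the cost function and the concavity/monotonicity properties already noted in the remarks. First I would verify that $\underline{b}$ and $\bar b$ lie in $\mathcal{C}({E})$: lower semicontinuity in $(x,\mu,z)$ follows because $s\mapsto U(s+z\bar c/(1-\beta))$ is continuous and bounded on the relevant compact $s$-range and $\mu\mapsto\int\cdot\,d\mu^S$ is weakly continuous, while $\bar b\ge V_0$ (and $\underline b\ge V_0$) since $U$ is increasing and $\underline c,\bar c>0$. Then I would show $T$ maps the order interval $[\underline b,\bar b]$ (functions $v\in\mathcal{C}({E})$ with $\underline b\le v\le\bar b$) into itself. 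The key observation is that if $v(x,\mu,z)=\int_{\R_+}U(s+z\bar c/(1-\beta))\mu^S(ds)$, then for any $a$ the post-decision measure $\Psi(x,a,x',\mu,z)$ has $S$-marginal obtained by shifting $s\mapsto s+zc(x,y,a)$ and the $z$-component becomes $\beta z$; using $c\le\bar c$ and the geometric-series identity $zc+\beta z\bar c/(1-\beta)\le z\bar c/(1-\beta)$, monotonicity of $U$ yields $T\bar b\le\bar b$, and $T\bar b\ge V_0$ as in the last display of the proof of Theorem~\ref{theo:Bellman1}; the analogous bound with $\underline c$ gives $T\underline b\ge\underline b$. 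Monotonicity of $T$ then propagates these bounds to all of $[\underline b,\bar b]$.

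Next I would establish the monotone convergence statements. Since $V_0\in\mathcal C({E})$ and $TV_0\ge V_0$ with $T$ monotone (Remark~\ref{rem:Top}), $V_n=T^nV_0$ is increasing; because $V_0\le\bar b$ and $T\bar b\le\bar b$, induction gives $V_n\le\bar b$ for all $n$, so $V_n\uparrow v_\infty^+:=\sup_n T^nV_0$ pointwise with $v_\infty^+\le\bar b$. Similarly $T^n\underline b\uparrow$ (since $T\underline b\ge\underline b$) to some limit, and $T^n\bar b\downarrow$ (since $T\bar b\le\bar b$) to some limit, both sandwiched between $\underline b$ and $\bar b$. The crucial analytic step is to show all three limits coincide with $V_\infty$ and satisfy $v=Tv$. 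For the fixed-point property I would argue: the increasing limit $v_\infty^+$ of lower semicontinuous functions is lower semicontinuous, so $v_\infty^+\in\mathcal C({E})$; monotone convergence inside the integral defining $T_f$ gives $T_f v_\infty^+=\lim_n T_f T^nV_0\ge\lim_n T^{n+1}V_0=v_\infty^+$ for every $f\in F$, hence $Tv_\infty^+\ge v_\infty^+$; conversely, picking a minimizer $f^*$ of $v_\infty^+$ (which exists by the same argument as in Theorem~\ref{theo:Bellman1}, since $v_\infty^+\in\mathcal C({E})$) and using $T^{n+1}V_0\le T_{f^*}T^nV_0\uparrow T_{f^*}v_\infty^+=Tv_\infty^+$ gives $v_\infty^+\ge Tv_\infty^+$, so $v_\infty^+=Tv_\infty^+$.

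To identify $v_\infty^+$ with $V_\infty$ and obtain the optimal policy, I would use the cost iteration of Theorem~\ref{theo:Bellman1}(a): for any $\sigma=(f_0,f_1,\ldots)\in\Pi^M$, $V_{N\sigma}=T_{f_0}\cdots T_{f_{N-1}}V_0\ge T^NV_0$, and since $c$ is bounded below by $\underline c>0$, dominated/monotone convergence in the definition \eqref{eq:prob4} gives $V_{N\sigma}\uparrow V_{\infty\sigma}$ as $N\to\infty$ (the tail $\sum_{k\ge N}\beta^kc$ vanishes and $U$ is continuous), whence $V_{\infty\sigma}\ge v_\infty^+$; taking the infimum over $\sigma\in\Pi^M$ — which suffices by the reduction to the MDP as in Theorem~\ref{theo:Bellman1}(b) — yields $V_\infty\ge v_\infty^+$. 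For the reverse inequality, take the stationary policy $\pi^*=(f^*,f^*,\ldots)$ with $f^*$ the minimizer of $v_\infty^+$; then $V_{f^*}=\lim_N T_{f^*}^N V_0$ (again by $V_{N f^*}=T_{f^*}^NV_0$ and the same limiting argument) and $T_{f^*}^N V_0\le T_{f^*}^N\bar b\downarrow$, while iterating $T_{f^*}v_\infty^+=Tv_\infty^+=v_\infty^+$ together with $V_0\le v_\infty^+$ and a squeeze using $T_{f^*}^N\bar b$ and $T_{f^*}^N V_0$ forces $V_{f^*}=v_\infty^+$, so $V_\infty\le V_{f^*}=v_\infty^+$. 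Hence $V_\infty=v_\infty^+$, it is the unique fixed point in the order interval (uniqueness: any fixed point $v$ with $\underline b\le v\le\bar b$ satisfies $T^n\underline b\le v\le T^n\bar b$, and once I show the outer limits agree this pins $v$), and $(g_0^*,g_1^*,\ldots)$ with $g_n^*(h_n)=f^*(x_n,\mu_n(\cdot|h_n),\beta^n)$ is optimal for \eqref{eq:Jinfty} via the embedding $\Pi^M\subset\Pi$ and Theorem~\ref{theo:muinterpr}. I expect the main obstacle to be the clean justification that $T^n\bar b$ and $T^n\underline b$ both converge to the same limit $V_\infty$ — i.e., that the upper and lower monotone iterations meet — which is where concavity of $U$ (hence of $\mu\mapsto V_n(x,\mu,z)$, Remark after Theorem~\ref{theo:Bellman1}) and the uniform boundedness of $c$ must be combined carefully; the boundedness gives that the difference $\bar b-\underline b$ contracts geometrically under $T$ along the $z$-direction (each application multiplies $z$ by $\beta$ and $U$ is uniformly continuous on the relevant compact range), squeezing the gap to zero.
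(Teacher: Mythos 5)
Your overall architecture is the paper's: show $T\bar b\le\bar b$ and $T\underline b\ge\underline b$, iterate monotonically from $V_0$, identify the limit with $V_\infty$ via the finite-horizon cost iteration, get the stationary-generator policy from a minimizer of the limit, and prove uniqueness by sandwiching any fixed point between $T^n\underline b$ and $T^n\bar b$. But there is one genuinely broken step: your ``conversely'' argument for the fixed-point equation. The chain $T^{n+1}V_0\le T_{f^*}T^nV_0\uparrow T_{f^*}v_\infty^+=Tv_\infty^+$ yields $v_\infty^+\le Tv_\infty^+$ — the same (easy) direction you already had from monotonicity — not $v_\infty^+\ge Tv_\infty^+$. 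The hard direction cannot come from monotonicity alone, because it amounts to exchanging $\inf_{a}$ with an increasing limit ($\inf\sup\ge\sup\inf$ goes the wrong way). Your subsequent identification $V_{f^*}=v_\infty^+$ then quotes ``iterating $T_{f^*}v_\infty^+=Tv_\infty^+=v_\infty^+$,'' i.e.\ it uses the very fixed-point identity you were trying to establish, so the argument is circular as written.

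The repair is exactly the quantitative tail estimate that you relegate to a closing heuristic. The paper uses concavity to get $U(s_1+s_2)\le U(s_1)+U'_-(s_1)s_2$ and hence, for every $\sigma$,
\begin{equation*}
V_{n\sigma}(x,\mu,z)\le V_{\infty\sigma}(x,\mu,z)\le V_{n\sigma}(x,\mu,z)+\varepsilon_n(z),\qquad \varepsilon_n(z):=\beta^n\tfrac{z\bar c}{1-\beta}\,U'_-(z\underline c)\to 0 .
\end{equation*}
Taking infima gives $V_n\le V_\infty\le V_n+\varepsilon_n$, which simultaneously (i) identifies $\lim_nT^nV_0=V_\infty$ without any appeal to a stationary policy, and (ii) delivers the hard inequality by applying $T$ to $V_n+\varepsilon_n\ge V_\infty$: since the constant $\varepsilon_n$ passes through $T$ as $\varepsilon_{n+1}$ (the $z$-argument becomes $\beta z$), one gets $V_{n+1}+\varepsilon_{n+1}\ge TV_\infty$ and, in the limit, $V_\infty\ge TV_\infty$. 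The same bound closes the gap $0\le T^n\bar b-T^nV_0\le\varepsilon_n(z)$. Your ``uniform continuity of $U$ on the relevant compact range'' idea can be made to work in place of the derivative bound (the $s$-support is $[0,K(\mu)]$ and the accumulated cost is at most $\bar c/(1-\beta)$, so all arguments live in a compact set for fixed $\mu$), and it has the advantage of not using concavity — but you must actually carry it out and, crucially, deploy it at the fixed-point step rather than only for $T^n\bar b$ versus $T^n\underline b$; you also need the cost-iteration identity with terminal cost $\bar b$ (a routine extension of Theorem~\ref{theo:Bellman1}(a)) to express $T^n\bar b$ as an infimum of expected utilities. Once $V_\infty=TV_\infty$ is in hand, your argument for part (b) ($T_{f^*}^NV_0\le T_{f^*}^NV_\infty=V_\infty$, hence $V_{f^*}\le V_\infty\le V_{f^*}$) is sound and coincides with the paper's.
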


\begin{proof}
\begin{itemize}
 \item[a)]
 We first show that $V_n=T^n V_0 \uparrow V_\infty$ for $n\to\infty$.
  To this end note that for $U:\R_+\to\R$ increasing and concave we obtain the inequality
$$ U(s_1+s_2) \le U(s_1)+U'_-(s_1) s_2,\quad s_1,s_2\ge 0$$
where $U'_-$ is the left-hand side derivative of $U$ which exists since $U$ in concave. Moreover, $U'_-(s) \ge 0$ and $U'_-$ is non-increasing. For $(x,\mu,z)\in {E}$ and $\sigma\in\Pi$ it holds
\begin{eqnarray}
\nonumber  && V_n(x,\mu,z)\le V_{n\sigma}(x,\mu,z) \le V_{\infty\sigma}(x,\mu,z)  \\
\nonumber  &=& \int_{E_Y}\int_{\R_+}\Eop^\sigma_{xy} \Big[U\Big(s+z\sum_{k=0}^\infty \beta^k c(X_k,Y_k,A_k)\Big)\Big]\mu(dy,ds) \\
\nonumber    &\le&  \int_{E_Y}\int_{\R_+} \Eop_{xy}^\sigma \Big[U\Big(s+z\sum_{k=0}^{n-1} \beta^k c(X_k,Y_k,A_k)\Big)\Big]\mu(dy,ds) \\
\nonumber    && +  \int_{E_Y} \int_{\R_+}\Eop_{xy}^\sigma \Big[U'_-\Big(s+z\sum_{k=0}^{n-1} \beta^k c(X_k,Y_k,A_k)\Big) z \sum_{m=n}^{\infty} \beta^k c(X_m,Y_m,A_m) \Big]\mu(dy,ds)\\
\nonumber &\le & V_{n\sigma}(x,\mu,z) + \beta^n \frac{z\bar{c}}{1-\beta} \int_{\R_+} U'_-(s+z\underline{c})\mu^S(ds)  \\
\label{eq:epsinequ}&\le&  V_{n\sigma}(x,\mu,z) + \beta^n \frac{z\bar{c}}{1-\beta} U'_-(z\underline{c})   =:  V_{n\sigma}(x,\mu,z) + \varepsilon_n(z),
\end{eqnarray}
where $\varepsilon_n(z)$ has implicitly been defined in the last equation.

Obviously $\lim_{n\to\infty} \varepsilon_n(z) =0$. Taking the infimum over all policies in the preceding inequality yields:
$$ V_n(x,\mu,z) \le V_\infty(x,\mu,z) \le V_n(x,\mu,z) + \varepsilon_n(z).$$
Letting $n\to\infty$ yields $V_n=T^n V_0 \uparrow V_\infty$ for $n\to\infty$. Note that the convergence of $T^n V_0$ is monotone (see Remark \ref{rem:Top}).

By direct inspection we obtain $\underline{b}\le V_\infty\le \bar{b}$. We next show that $V_\infty = TV_\infty$. Note that $V_n\le V_\infty$ for all $n$. Since $T$ is increasing we have $V_{n+1} = TV_n \le TV_\infty$ for all $n$. Letting $n\to\infty$ implies $V_\infty\le TV_\infty$. For the reverse inequality recall that  $V_n+\varepsilon_n \ge V_\infty$ from \eqref{eq:epsinequ}. Applying the $T$-operator yields $V_{n+1}+ \varepsilon_{n+1}= T(V_n+\varepsilon_n) \ge TV_\infty$ and letting $n\to\infty$ we obtain $V_\infty\ge TV_\infty$. Hence it follows $V_\infty= TV_\infty$.

Next, we obtain \begin{eqnarray*} (T \bar{b})(\mu,z)  &=& \inf_{a\in D(x)} \int_{\R_+} U\Big(s'+\frac{z\beta \bar{c}}{1-\beta}\Big) \Psi^S(x,a,x'\mu,z)(ds') \\
&\le& \int_{\R_+} U\Big(s+z \bar{c}+\frac{z\beta\bar{c}}{1-\beta}\Big)\mu^S(ds)\\
&= & \int_{\R_+}U\Big(s+\frac{z\bar{c}}{1-\beta}\Big)\mu^S(ds) = \bar{b}(\mu,z).
\end{eqnarray*}
Analogously $T\underline{b}\ge \underline{b}$. Thus we get that $T^n \bar{b}\downarrow $ and $T^n \underline{b}\uparrow $ and the limits exist. Moreover, we obtain by iteration:
\begin{eqnarray*}
  &&(T^n \underline{b})(x,\mu,z) =\\
  &=& \inf_{\pi\in\Pi^M} \int_{E_Y}\int_{\R_+} \Eop_{xy}^\pi \Big[ U\Big(s+\frac{z\underline{c}\beta^n}{1-\beta} +z\sum_{k=0}^{n-1} \beta^k c(X_k,Y_k,A_k)\Big)\Big]\mu(dy,ds) \ge (T^n V_0)(x,\mu,z). \\
   &&(T^n \bar{b})(x,\mu,z)  =\\
     &=&\inf_{\pi\in\Pi^M}  \int_{E_Y}\int_{\R_+} \Eop_{xy}^\pi \Big[ U\Big(s+ \frac{z\bar{c}\beta^n}{1-\beta} +z \sum_{k=0}^{n-1} \beta^k c(X_k,Y_k,A_k) \Big)\Big]\mu(dy,ds).
\end{eqnarray*}
Using $U(s_1+s_2)-U(s_1)\le U'_-(s_1) s_2$ we obtain:
\begin{eqnarray*}
   0&\le& (T^n \bar{b})(x,\mu,z) -  (T^n \underline{b})(x,\mu,z) \le(T^n \bar{b})(x,\mu,z) -  (T^n V_0)(x,\mu,z)  \\ &\le& \sup_{\pi\in\Pi}  \int_{E_Y}\int_{\R_+} \Eop_{xy}^\pi \Big[ U\Big(s+\frac{z\bar{c}\beta^n}{1-\beta} +z \sum_{k=0}^{n-1} \beta^k c(X_k,Y_k,A_k)\Big)-\\
&& \hspace*{4cm}
U\Big(s+z\sum_{k=0}^{n-1} \beta^k c(X_k,Y_k,A_k)  \Big) \Big] \mu (dy,ds) \\
   &\le&  \varepsilon_n(z)
\end{eqnarray*}
and the right-hand side converges to zero for $n\to\infty$. As a result $T^n \bar{b} \downarrow V_\infty$ and $T^n \underline{b} \uparrow V_\infty$ for $n\to\infty$.

Since $V_n$ is lower semicontinuous, this yields immediately that $V_\infty$ is again lower semicontinuous, thus $V_\infty \in \mathcal{C}({E})$.

For the uniqueness suppose that $v\in \mathcal{C}({E})$ is another solution of $v=Tv$ with \linebreak $\underline{b}\le v\le \bar{b}$. Then $T^n \underline{b} \le v \le T^n \bar{b}$ for all $n\in\N$ and since the limit $n\to\infty$ of the right and left-hand side are equal to $V_\infty$ the statement follows.

  \item[b)]
 The existence of a minimizer follows from our standing assumption (A) as in the proof of Theorem \ref{theo:Bellman1}. From our assumption and the fact that $V_\infty\ge V_0$ we obtain
  $$V_\infty = \lim_{n\to\infty} T_{f^*}^n V_\infty \ge \lim_{n\to\infty} T_{f^*}^n V_0 = \lim_{n\to\infty} V_{n(f^*,f^*,\ldots)} = V_{f^*}\ge V_\infty$$
  where the last equation follows with dominated convergence. Hence $(g_0^*,g_1^*,\ldots )$ is optimal for  \eqref{eq:Jinfty}.
\end{itemize}
\end{proof}

\subsection{Convex Utility Function}
Here we consider the problem with convex utility $U$. This situation represents a risk averse decision maker. The value functions $V_{n\sigma},V_n,V_{\infty\sigma},V_\infty$ are defined as in the previous section.

\begin{theorem}\label{theo:limit4}
Theorem \ref{theo:limit2} also holds for convex $U$.
\end{theorem}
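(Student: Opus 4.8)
The plan is to rerun the proof of Theorem~\ref{theo:limit2} essentially verbatim, the only structural change being that the concavity estimate $U(s_1+s_2)\le U(s_1)+U'_-(s_1)\,s_2$ is replaced by the convexity estimate
$$ U(s_1+s_2)\le U(s_1)+U'_-(s_1+s_2)\,s_2,\qquad s_1,s_2\ge 0, $$
which holds because a convex $U:\R_+\to\R$ has a non-decreasing left derivative $U'_-$ and $U(b)-U(a)=\int_a^b U'_+(t)\,dt\le (b-a)\,U'_-(b)$ for $0\le a\le b$. Because monotonicity of $U'_-$ now runs the other way, the error term must be evaluated at the largest possible argument rather than at a fixed small one; this is exactly where the assumption $\mu\in\Pop_b(E_Y\times\R_+)$ enters.

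Fix $\mu\in\Pop_b(E_Y\times\R_+)$ and choose $K=K(\mu)$ with $\mu(E_Y\times[0,K])=1$. I would define
$$ \varepsilon_n(\mu,z):=\beta^n\,\frac{z\bar c}{1-\beta}\;U'_-\!\Big(K+\frac{z\bar c}{1-\beta}\Big). $$
For any $\sigma\in\Pi$ and any trajectory one has $s\le K$ and $s+z\sum_{k\ge 0}\beta^k c(X_k,Y_k,A_k)\le K+\frac{z\bar c}{1-\beta}$, so applying the convexity estimate with $s_1=s+z\sum_{k=0}^{n-1}\beta^k c(X_k,Y_k,A_k)$ and $s_2=z\sum_{m\ge n}\beta^m c(X_m,Y_m,A_m)\le \beta^n\frac{z\bar c}{1-\beta}$, using the monotonicity of $U'_-$ and then taking expectations and the infimum over $\sigma$ exactly as in \eqref{eq:epsinequ}, yields
$$ V_n(x,\mu,z)\le V_\infty(x,\mu,z)\le V_n(x,\mu,z)+\varepsilon_n(\mu,z). $$
Since $\varepsilon_n(\mu,z)\to 0$ for each fixed $(\mu,z)$ and $V_n=T^nV_0$ is increasing (Remark~\ref{rem:Top}), this proves $T^nV_0\uparrow V_\infty$ as well as $\underline b\le V_\infty\le\bar b$ by direct inspection.

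The remaining steps are transcriptions of the concave case. As there, $V_\infty\le TV_\infty$ follows from $V_{n+1}=TV_n\le TV_\infty$ and monotonicity of $T$. For $TV_\infty\le V_\infty$ one applies $T$ to $V_\infty\le V_n+\varepsilon_n$; here I would use that the updating operator \eqref{eq:Psi} shifts the $S$-coordinate by at most $z\bar c$, so $K\big(\Psi(x,a,x',\mu,z)\big)\le K(\mu)+z\bar c$, and then the identity $\big(K(\mu)+z\bar c\big)+\frac{\beta z\bar c}{1-\beta}=K(\mu)+\frac{z\bar c}{1-\beta}$ gives $\varepsilon_n\big(\Psi(x,a,x',\mu,z),\beta z\big)\le \varepsilon_{n+1}(\mu,z)$, hence $T(V_n+\varepsilon_n)\le V_{n+1}+\varepsilon_{n+1}$ and, letting $n\to\infty$, $TV_\infty\le V_\infty$. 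The inequalities $T\bar b\le\bar b$, $T\underline b\ge\underline b$ and $0\le T^n\bar b-T^n\underline b\le T^n\bar b-T^nV_0\le\varepsilon_n\to 0$ are obtained as in Theorem~\ref{theo:limit2}, now invoking the convexity estimate; together with $T^nV_0\uparrow V_\infty$ this gives $T^n\bar b\downarrow V_\infty$ and $T^n\underline b\uparrow V_\infty$. Lower semicontinuity of $V_\infty$ is immediate since it is an increasing limit of lower semicontinuous $V_n$, and uniqueness of the fixed point in $\{\underline b\le v\le\bar b\}$ follows from $T^n\underline b\le v\le T^n\bar b$. For part~(b), a minimizer $f^*$ of $V_\infty$ exists by assumption~(A) as in the proof of Theorem~\ref{theo:Bellman1}, and from $V_\infty=T_{f^*}V_\infty$ one gets $V_\infty=T_{f^*}^nV_\infty\ge T_{f^*}^nV_0=V_{n(f^*,f^*,\ldots)}\uparrow V_{f^*}\ge V_\infty$, the convergence being monotone since $s+z\sum_{k=0}^{n-1}\beta^k c$ increases in $n$ and stays bounded by $K(\mu)+\frac{z\bar c}{1-\beta}$; hence $(g_0^*,g_1^*,\ldots)$ is optimal.

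The only genuinely new point, and the one demanding care, is the choice of the error term $\varepsilon_n(\mu,z)$: for convex $U$ it necessarily depends on the support bound $K(\mu)$, and one must verify that this $\mu$-dependent bound is still compatible with the operator $T$ — which works solely because of the exact cancellation $z\bar c+\frac{\beta z\bar c}{1-\beta}=\frac{z\bar c}{1-\beta}$. All other estimates go through unchanged.
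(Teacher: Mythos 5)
Your proposal is correct and follows essentially the same route as the paper: the paper likewise replaces the concavity estimate by the convexity estimate with the derivative evaluated at the larger argument $s_1+s_2$ (using $U'_+$ where you use $U'_-$, both valid) and obtains a $\mu$-dependent error term, which it writes as $\delta_n(\mu,z)=\frac{z\bar c\beta^n}{1-\beta}\int U'_+\big(s+\frac{z\bar c}{1-\beta}\big)\mu^S(ds)$ rather than bounding $s$ by $K(\mu)$ as you do. Your explicit check that the $\mu$-dependence of the error term is compatible with the $T$-operator (via $K(\Psi(x,a,x',\mu,z))\le K(\mu)+z\bar c$ and the cancellation $z\bar c+\frac{\beta z\bar c}{1-\beta}=\frac{z\bar c}{1-\beta}$) is a detail the paper leaves implicit, and is a welcome addition rather than a deviation.
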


\begin{proof}
The proof follows along the same lines as in Theorem \ref{theo:limit2}. The only difference is that we have to use another inequality: Note that for $U:\R_+\to\R$ increasing and convex we obtain the inequality
$$ U(s_1+s_2) \le U(s_1)+U'_+(s_1+s_2) s_2,\quad s_1,s_2\ge 0$$
where $U'_+$ is the right-hand side derivative of $U$ which exists since $U$ in convex. Moreover, $U'_+(s) \ge 0$ and $U'_+$ is increasing.  Thus, we obtain for $(x,\mu,z)\in {E}$ and $\sigma\in\Pi$:
\begin{eqnarray*}
  V_n(x,\mu,z)&\le& V_{n\sigma}(x,\mu,z) \le V_{\infty\sigma}(x,\mu,z) \\
  &=& \int_{E_Y}\int_{\R_+}\Eop^\sigma_{xy} \Big[U\Big(s+z\sum_{k=0}^\infty \beta^k c(X_k,Y_k,A_k)\Big)\Big]\mu(dy,ds) \\
 &\le&  \int_{E_Y}\int_{\R_+} \Eop_{xy}^\sigma  \Big[U\Big(s+z\sum_{k=0}^{n-1} \beta^k c(X_k,Y_k,A_k)\Big)\Big] +\\
  && \hspace*{2cm}+\Eop_{xy}^\sigma \Big[U'_+\Big( s+z\sum_{k=0}^\infty \beta^k c(X_k,Y_k,A_k)\Big) z \sum_{k=n}^\infty \beta^{k}c(X_k,Y_k,A_k)\Big]\mu(dy,ds)  \\
   &\le&  V_{n\sigma}(x,\mu,z) + \frac{z\bar{c}\beta^n }{1-\beta} \int_{\R_+} U'_+\Big( s+\frac{z\bar{c}}{1-\beta}\Big)\mu^S(ds) .
\end{eqnarray*}
Note that the last inequality follows from the fact that $c$ is bounded from above by $\bar{c}$.
Now denote $\delta_n(\mu,z) := \frac{z\bar{c}\beta^n }{1-\beta} \int_{\R_+}U'_+\Big(s+ \frac{z\bar{c}}{1-\beta}+\Big)\mu^S(ds)  $. Obviously $\lim_{n\to\infty} \delta_n(\mu,z) =0$. Taking the infimum over all policies in the above inequality yields:
$$ V_n(x,\mu,z) \le V_\infty(x,\mu,z) \le V_n(x,\mu,z) +\delta_n(\mu,z).$$
Letting $n\to\infty$ yields $T^n V_0\to V_\infty$.

Further we have to use the inequality
\begin{eqnarray*}
   0&\le& (T^n \bar{b})(x,\mu,z) -  (T^n \underline{b})(x,\mu,z)\le (T^n \bar{b})(x,\mu,z) -  (T^n V_0)(x,\mu,z)  \\ &\le& \sup_{\pi\in\Pi} \int_{E_Y}\int_{\R_+}\Eop_x^\pi \Big[ U\Big(s+\frac{z\bar{c}\beta^n}{1-\beta} +z \sum_{k=0}^{n-1} \beta^k c(X_k,Y_k,A_k)\Big)-\\
&& \hspace*{2cm}
U\Big(s+z\sum_{k=0}^{n-1} \beta^k c(X_k,Y_k,A_k)  \Big) \Big]\mu(dy,ds) \\
   &\le&  \frac{z\bar{c}\beta^n}{1-\beta} \int_{\R_+}U'_+ \Big(s+  \frac{z\bar{c}}{1-\beta} \Big)\mu^S(ds) = \delta_n(\mu,z)
\end{eqnarray*}
and the right-hand side converges to zero for $n\to\infty$.
\end{proof}

\subsection{Exponential Utility}
Of course the result for the infinite horizon problem can now be specialized to various situations like in Section \ref{sec:special}. This can be done rather straightforward. We only present the case of the exponential utility due to its importance.

\begin{corollary}
 In case $U(x)= \frac 1\gamma e^{\gamma x}$ with $\gamma\neq 0$, we obtain
\begin{itemize}
  \item[a)]$V_\infty(x,\mu,z) = \int e^{\gamma s}\mu^S(ds)\cdot \mathbf{e}_\infty(x,\hat{\mu},\gamma z),\; (x,\mu,z)\in E_X\times\Pop(E_Y\times\R_+)\times(0,1]$ where $\hat{\mu}$ has  been defined in \eqref{eq:muhat} and the function $\mathbf{e}_\infty$ is the unique fixed point of
$$\mathbf{e}_\infty(x,{\mu},\gamma z) = \inf_{a\in D(x)}   \int_{E_X} \mathbf{e}_{\infty}(x',\Psi_e(x,a,x',{\mu},\gamma z),\beta\gamma z) \hat{Q}^X\big(dx' | x,{\mu},a,\gamma z\big),$$
for $(x,\mu,z)\in E_X\times \Pop(E_Y)\times (0,1]$ with $U(\frac{z\underline{c}}{1-\beta}) \le \mathbf{e}_\infty(x,\mu,\gamma z) \le U(\frac{z\bar{c}}{1-\beta})$. The value function of \eqref{eq:Jinfty} is then given by $J_\infty(x) = \mathbf{e}_\infty(x,Q_0,\gamma).$
  \item[b)] There exists a minimizer $f^*$ of $ \mathbf{e}_\infty$ and $(g_0^*,g_1^*,\ldots )$ with
 $$ g_n^*(h_n) := f^*\Big(x_n, \mu_n^e(\cdot|h_n),\gamma\beta^n \Big)$$ is an optimal policy for \eqref{eq:Jinfty}, where the sequence $(\mu_n^e)$ of posterior distributions is generated by the updating operator $\Psi_e$ with $\mu_0^e:=Q_0$ like in Theorem \ref{theo:exp}.

\end{itemize}

\end{corollary}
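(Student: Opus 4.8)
The plan is to deduce everything from the abstract infinite-horizon result together with the exponential reduction already established in finite horizon. Since $U(x)=\frac1\gamma e^{\gamma x}$ is strictly increasing, and concave for $\gamma<0$, convex for $\gamma>0$, one of Theorem \ref{theo:limit2}, Theorem \ref{theo:limit4} applies directly: $V_\infty$ is the unique function in $\mathcal{C}(E)$ with $\underline b\le V_\infty\le\bar b$ solving $v=Tv$ (with $T$ from \eqref{eq:Top}), and $T^nV_0\uparrow V_\infty$, $T^n\underline b\uparrow V_\infty$, $T^n\bar b\downarrow V_\infty$. The whole corollary is then obtained by passing the exponential factorization $V_n(x,\mu,z)=\big(\int_{\R_+}e^{\gamma s}\mu^S(ds)\big)\,\mathbf{e}_n(x,\hat{\mu},\gamma z)$ of Theorem \ref{theo:exp} to the limit. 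Note that the scalar $\int e^{\gamma s}\mu^S(ds)$ lies in $(0,\infty)$ because $\mu\in\Pop_b(E_Y\times\R_+)$, so divisions by it are legitimate, and $\widehat{\rho\otimes\delta_0}=\rho$ with $\int e^{\gamma s}(\rho\otimes\delta_0)^S(ds)=1$ for every $\rho\in\Pop(E_Y)$.

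First I would define $\mathbf{e}_\infty(x,\rho,\gamma z):=V_\infty(x,\rho\otimes\delta_0,z)$. From Theorem \ref{theo:exp}, evaluating the factorization at $\mu$ and at $\hat{\mu}\otimes\delta_0$ and dividing gives $V_n(x,\mu,z)=\big(\int e^{\gamma s}\mu^S(ds)\big)\,V_n(x,\hat{\mu}\otimes\delta_0,z)$; letting $n\to\infty$ with $V_n\uparrow V_\infty$ (Remark \ref{rem:Top}) yields $V_\infty(x,\mu,z)=\big(\int e^{\gamma s}\mu^S(ds)\big)\,\mathbf{e}_\infty(x,\hat{\mu},\gamma z)$ for all $\mu$, together with $\mathbf{e}_n\uparrow\mathbf{e}_\infty$. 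Substituting this expression for $V_\infty$ into $V_\infty=TV_\infty$ and running the exact chain of equalities in the proof of Theorem \ref{theo:exp} (which never refers to the horizon index) shows that the scalar cancels and one is left with $\mathbf{e}_\infty=\mathbf{T}_e\mathbf{e}_\infty$, where $\mathbf{T}_e$ is the operator on the right-hand side of the displayed fixed-point equation in the corollary built from $\hat{Q}^X$ and $\Psi_e$ of \eqref{eq:hatQ}--\eqref{eq:hatpsi}. Dividing $\underline b\le V_\infty\le\bar b$ by $\int e^{\gamma s}\mu^S(ds)$ and using $\bar b(\mu,z)=U\big(\frac{z\bar c}{1-\beta}\big)\int e^{\gamma s}\mu^S(ds)$, $\underline b(\mu,z)=U\big(\frac{z\underline c}{1-\beta}\big)\int e^{\gamma s}\mu^S(ds)$ gives the stated bounds on $\mathbf{e}_\infty$, and evaluating at $\mu=Q_0\otimes\delta_0$, $z=1$ gives $J_\infty(x)=V_\infty(x,Q_0\otimes\delta_0,1)=\mathbf{e}_\infty(x,Q_0,\gamma)$.

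For uniqueness in part (a) I would exploit that $T$ factors through $\mathbf{T}_e$ via the bijection $v\leftrightarrow\mathbf{e}$: the functions $\bar b$, $\underline b$ correspond to the $(x,\mu)$-independent functions $\bar{\mathbf{e}}(z):=U\big(\frac{z\bar c}{1-\beta}\big)$, $\underline{\mathbf{e}}(z):=U\big(\frac{z\underline c}{1-\beta}\big)$, so $T^n\bar b\downarrow V_\infty$ and $T^n\underline b\uparrow V_\infty$ translate into $\mathbf{T}_e^n\bar{\mathbf{e}}\downarrow\mathbf{e}_\infty$ and $\mathbf{T}_e^n\underline{\mathbf{e}}\uparrow\mathbf{e}_\infty$; any $\tilde{\mathbf{e}}=\mathbf{T}_e\tilde{\mathbf{e}}$ with $\underline{\mathbf{e}}\le\tilde{\mathbf{e}}\le\bar{\mathbf{e}}$ then satisfies $\mathbf{T}_e^n\underline{\mathbf{e}}\le\tilde{\mathbf{e}}\le\mathbf{T}_e^n\bar{\mathbf{e}}$ for all $n$ and is squeezed to $\mathbf{e}_\infty$. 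Part (b) follows from Theorem \ref{theo:limit2}(b) (valid also for convex $U$ by Theorem \ref{theo:limit4}): a minimizer $f^*$ of $\mathbf{e}_\infty$ is, via the factorization, a minimizer of $V_\infty$, the induced history-dependent policy depends on $\mu_n(\cdot|h_n)$ only through its normalized exponential transform $\hat{\mu}_n$, which by Remark \ref{rem:psie} equals $\mu_n^e$ and is generated by $\Psi_e$ with $\mu_0^e=Q_0$, and its optimality for \eqref{eq:Jinfty} is the optimality statement of Theorem \ref{theo:limit2}(b) rewritten in these variables. The step requiring the most care is the cancellation of $\int e^{\gamma s}\mu^S(ds)$ across one transition, i.e. checking that the $S$-marginal of $\Psi(x,a,x',\mu,z)$ combines with the exponential weight exactly so as to reproduce this scalar times the $\Psi_e$-update of $\hat{\mu}$; but this is precisely the computation already carried out in the proof of Theorem \ref{theo:exp}, and once it is noted to be independent of the horizon, the remainder is a routine monotone passage to the limit and a sandwich argument.
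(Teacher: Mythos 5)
Your proposal is correct and follows exactly the route the paper intends: the paper gives no explicit proof of this corollary, stating only that it is a "straightforward" specialization of Theorem \ref{theo:limit2} (resp.\ Theorem \ref{theo:limit4}, depending on the sign of $\gamma$) combined with the exponential factorization of Theorem \ref{theo:exp}, and your argument is a faithful, correct elaboration of precisely that: pass the identity $V_n(x,\mu,z)=\int e^{\gamma s}\mu^S(ds)\cdot\mathbf{e}_n(x,\hat{\mu},\gamma z)$ to the limit, cancel the positive scalar through one application of $T$ as in the proof of Theorem \ref{theo:exp}, and translate the bounds, the sandwich uniqueness argument, and the minimizer via $\widehat{\rho\otimes\delta_0}=\rho$ and Remark \ref{rem:psie}. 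No gaps.
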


{\bf Acknowledgements:} The authors would like to thank three referees for helpful comments and suggestions which improved the presentation of the paper.

\bibliographystyle{plainnat}

\end{document}